\theoremstyle{plain}
\newtheorem{theorem}{Theorem}[section]
\newtheorem{lemma}[theorem]{Lemma}
\newtheorem{corollary}[theorem]{Corollary}
\newtheorem{proposition}[theorem]{Proposition}
\theoremstyle{definition}
\newtheorem{remark}[theorem]{Remark}
\newtheorem{definition}[theorem]{Definition}
\numberwithin{equation}{section}
\def\N{\mathbb N}
\def\R{\mathbb R}
\begin{document}

 \baselineskip=17pt    

\title{Characterization of the weak-type boundedness of the Hilbert transform on weighted Lorentz spaces} 
\author {Elona Agora, Mar\'{\i}a J. Carro,  and Javier Soria} 

\address{E. Agora, Departament de
Matem\`atica Aplicada i~An\`alisi, Universitat de Barcelona, 
 08007 Barcelona, Spain.}
\email{elona.agora@gmail.com}

\address{M. J. Carro, Departament de
Matem\`atica Aplicada i~An\`alisi, Universitat de Barcelona, 
 08007 Barcelona, Spain.}
\email{carro@ub.edu}

\address{J. Soria, Departament de
Matem\`atica Aplicada i~An\`alisi, Universitat de Bar\-ce\-lona, 
 08007 Barcelona, Spain.}
\email{soria@ub.edu}

\subjclass[2010]{26D10, 42A50}
\keywords{Weighted Lorentz spaces, Hilbert transform, Muckenhoupt weights, $B_p$ weights}
\thanks{This work was  partially supported by the Spanish Government Grant MTM2010-14946.} 

\begin{abstract} We   characterize the weak-type boundedness of the Hilbert transform $H$ on weighted Lorentz spaces
$\Lambda^p_u(w)$,  with $p>0$,   in terms of some geometric conditions on the weights $u$ and $w$ and the weak-type boundedness  of the Hardy-Littlewood maximal operator on the same spaces. Our results recover simultaneously the theory of the boundedness of $H$  on weighted Lebesgue spaces $L^p(u)$ and Muckenhoupt weights $A_p$,  and the theory on classical Lorentz  spaces $\Lambda^p(w)$ and Ari\~no-Muckenhoupt weights $B_p$. 
\end{abstract}

\date{\today}

\maketitle

\pagestyle{headings}\pagenumbering{arabic}\thispagestyle{plain}

\markboth{Weak-type boundedness of the Hilbert transform}{Elona Agora, Mar\'{\i}a J. Carro,  and Javier Soria }

\section{Introduction and motivation}

In this paper, we    characterize   the weak-type boundedness of the Hilbert transform on weighted Lorentz spaces
\begin{equation} \label{Hilbertantecedents}
H : \Lambda^p_u(w) \longrightarrow\Lambda^{p, \infty}_u(w),  
\end{equation}
 if $0<p<\infty$,  and $H$ is  the Hilbert transform defined by 
$$
Hf(x)=\frac{1}{\pi} \lim_{\varepsilon\to 0^+} \int_{|x-y| > \varepsilon} \frac{f(y)}{x-y}\,dy, 
$$
whenever this limit exists almost everywhere. We recall (see \cite{l1:l1, l2:l2}) that, given $u$,  a positive and locally integrable function (called weight) in $\mathbb R$ and given a weight $w$ in $\mathbb R^+$, the Lorentz space $\Lambda^{p}_{u}(w)$ is defined as 
$$
\Lambda^{p}_{u}(w) =\left\{f\in\mathcal M(\mathbb R): \, ||f||_{\Lambda^{p}_{u}(w)}=\left( \int_0^{\infty}  (f^*_u(t))^p w(t)dt\right) ^ {1/p}< \infty \right\},
$$
where  $\mathcal M=\mathcal M(\mathbb R)$ is the set of Lebesgue measurable functions on $\mathbb R$,  $f^*_u$  is  the decreasing rearrangement of $f$ with respect to the weight $u$  \cite{bs:bs}  
$$
f^*_u(t)=\inf \{ y>0: u(\{x\in\mathbb R: |f(x)|>y\}) \le t\}, 
$$
with $u(E)=\int_E u(x) dx$, and the weak-type Lorentz space is
$$
\Lambda^{p, \infty}_{u}(w) =\left\{f\in\mathcal M : \, ||f||_{\Lambda^{p, \infty}_{u}(w)}=\sup_{t>0} f^*_u(t) W(t)^ {1/p}< \infty \right\}, 
$$
where $W(t)=\int_0^t w(s)ds$.  In order to avoid trivial cases, we will assume that $u(x)>0$, a.e. $x\in\mathbb R$.

 The motivation for studying (\ref{Hilbertantecedents})  comes naturally, as a unified theory,  from the fact that weighted Lorentz spaces include, as particular examples, the weighted Lebesgue spaces $L^p(u)$ and    the classical Lorentz spaces $\Lambda^p(w)$,  and in both cases the boundedness of the Hilbert transform is already known \cite{hmw:hmw, cf:cf, s:s}. They also include the case of  the Lorentz spaces $L^{p, q}(u)$,  where only some partial results were previously known  \cite{chk:chk}.

\noindent
(i)  If $w=1$, \eqref{Hilbertantecedents} is equivalent to the fact that 
$$
H:L^p(u) \to L^{p, \infty}(u)
$$
is bounded,  and this problem was   solved by Hunt, Muckenhoupt,  and Wheeden  \cite{hmw:hmw}. An alternative proof was provided in \cite{cf:cf} by Coifman and Fefferman and the solution is the $A_p$ class of  weights,  if $p>1$ \cite{m:m}:
\begin{equation*}\label{defAp}
\sup_{I}\left(\frac{1}{|I|}\int_I u(x)dx\right) \left(\frac{1}{|I|} \int_I u^{-1/(p-1)}(x) dx \right)^{p-1}<\infty, 
\end{equation*}
where the supremum is considered over all intervals $I$ of the real line.

 This condition also characterizes the strong-type boundedness 
$$
H:L^p(u) \to L^{p}(u), 
$$
and if $p=1$ 
$$
H:L^1(u) \to L^{1,\infty}(u)
$$
is bounded if and only if $u\in A_1$: 
$$
Mu(x)\le C u(x), \quad \text{ a.e.} \  x\in\mathbb R, 
$$
with  $M$ being  the Hardy-Littlewood maximal function:
$$
Mf(x)=\sup_{x\in I}{\frac{1}{|I|}}\int_{I} |f(y)|dy,
$$
where the supremum is taken over all intervals $I$ containing $x\in \R$.  

Recall  \cite{gr:gr}
 that a weight $u\in A_\infty$ if and only if  there exist $C_u>0$ and  $\delta \in (0,1)$ such that,  for every interval $I$ and
every measurable set $E\subset I$,
\begin{equation} \label{Ainfty}
 \frac{u(E)}{u(I)}\le C_u \left(\frac{|E|}{|I|}\right)^{\delta}, 
\end{equation}
and  it holds that 
$$
A_{\infty}= \bigcup _{p\ge 1} A_p.
$$

\noindent
(ii)  On the other hand, if $u=1$, the characterization of \eqref{Hilbertantecedents} is equivalent to the boundedness of
$$
H:\Lambda^p(w) \longrightarrow  \Lambda^{p, \infty}(w),
$$
given by Sawyer  \cite{s:s}. A simplified description of the class of weights \cite{n:n} that characterizes this property is $B_{p, \infty}\cap B^*_{\infty}$,  where a weight  $w\in  B^*_{\infty}$ if 
\begin{equation}\label{binfty}
\int_0^r \frac{1}{t}\int_{0}^t w(s)ds \ dt \le C \int_{0}^r w(s)ds,
\end{equation}
for all $r>0$, and $w\in B_{p, \infty}$ if the Hardy operator
$$
Pf(t)=\frac 1t\int_0^t f(s) ds
$$
satisfies that
$$
P: L^p_{\text{dec}}(w) \longrightarrow L^{p, \infty}(w)
$$
is bounded, where
$$
 L^p_{\text{dec}}(w)=\left\{ f\in  L^p (w): f \mbox{ is decreasing} \right\}. 
 $$
These weights have been well studied  (see \cite{am:am, n1:n1, crs:crs}) and it is known that   if $p\le 1$ then,  $w\in B_{p, \infty}$ if and only if 
$W$ is  $p$ quasi-concave:  for every $0<r<t<\infty$
$$
\frac{W(t)}{t^p} \le C \frac{W(r)}{r^p}, 
$$
and if $p>1$, $B_{p, \infty}=B_p$,  where $w\in B_p$   if 
\begin{equation}
r^p \int_r^{\infty}\frac{w(t)}{t^p} \, dt  \le C  \int_0^rw(s) ds
\end{equation}
for every $r>0$.  Moreover, for every $p>0$, 
$$
M:\Lambda^p(w) \longrightarrow  \Lambda^{p, \infty}(w),
$$
if and only if $w\in B_{p, \infty}$, 

If we consider the strong-type boundedness 
$$
H:\Lambda^p(w) \longrightarrow  \Lambda^p(w),
$$
this is equivalent to the condition  $w\in B_{p}\cap B^*_{\infty}$.

In   \cite{acs:acs}  we gave the following characterization of the weights $w$ for which (\ref{Hilbertantecedents})  holds under the assumption that $u\in A_1$: 
$$
H: \Lambda^p_u(w) \to \Lambda^{p, \infty}_u(w)  \Longleftrightarrow  w\in B_{p,\infty}\cap B^*_{\infty}, \qquad p>0. 
$$
We also proved that if $p>1$ and $u\in A_1$, then 
$$
H: \Lambda^p_u(w) \to \Lambda^{p}_u(w)  \Longleftrightarrow  w\in B_{p}\cap B^*_{\infty}.
$$

The main result of this paper solves the weak-type boundedness of $H$ for a general weight $u$,  as follows: 

\begin{theorem} \label{main} For every $0<p<\infty$, 
$$
H: \Lambda^p_u(w) \to \Lambda^{p, \infty}_u(w)
$$
is bounded if and only if the following conditions hold: 

\noindent
\mbox{\rm (i)} $u\in A_\infty$.

\noindent
\mbox{\rm (ii)} $w\in B^*_\infty$.

\noindent
\mbox{\rm (iii)}
$ 
M: \Lambda^p_u(w) \to \Lambda^{p, \infty}_u(w)
$ 
is bounded. 
\end{theorem}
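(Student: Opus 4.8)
The plan is to prove the two implications of Theorem~\ref{main} separately.

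\emph{Sufficiency of (i)--(iii).} Assuming $u\in A_\infty$, $w\in B^*_\infty$ and the boundedness of $M$, the key step is the Coifman--Fefferman-type comparison
\[
\|Hf\|_{\Lambda^{p,\infty}_u(w)}\le C\,\|Mf\|_{\Lambda^{p,\infty}_u(w)},
\]
after which hypothesis~(iii) finishes the proof. To establish it I would first reduce, by replacing $u$ by $u\chi_{(-N,N)}$ and letting $N\to\infty$, to the situation in which the left-hand side is finite and $f$ is bounded with compact support. Since $u\in A_\infty$, the classical good-$\lambda$ inequality for $H$ gives, for every $\lambda>0$ and $\gamma\in(0,1)$,
\[
u(\{|Hf|>2\lambda\})\le u(\{Mf>\gamma\lambda\})+C\gamma^\delta\,u(\{|Hf|>\lambda\}),
\]
with $\delta>0$ depending only on the $A_\infty$ constant of $u$. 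Using the elementary identity $\|g\|_{\Lambda^{p,\infty}_u(w)}=\sup_{\lambda>0}\lambda\,W\big(u(\{|g|>\lambda\})\big)^{1/p}$ together with $W(a+b)\le W(2a)+W(2b)$ and $(s+t)^{1/p}\le c_p(s^{1/p}+t^{1/p})$, this reduces the comparison to controlling two error terms. The term carrying $u(\{|Hf|>\lambda\})$ is absorbed on the left after choosing $\gamma$ small, using condition~(ii) in the equivalent form $W(\varepsilon t)\le C\varepsilon^{\vartheta}W(t)$ for $0<\varepsilon<1$, $t>0$ (a reformulation of~\eqref{binfty}). The term carrying $u(\{Mf>\gamma\lambda\})$ appears composed with a dilation of the underlying measure; to remove it I would use that $u\in A_\infty$ forces the self-improvement $u(\{Mf>\Lambda\lambda\})\le\beta\,u(\{Mf>\lambda\})$ for suitable $\Lambda>1$, $\beta<1$ (a consequence of~\eqref{Ainfty} applied on each component interval of the open set $\{Mf>\lambda\}$, together with the localized weak $(1,1)$ bound for $M$), which after iteration yields $(Mf)^*_u(t/2)\le C\,(Mf)^*_u(t)$.

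\emph{Necessity.} Assume $H:\Lambda^p_u(w)\to\Lambda^{p,\infty}_u(w)$ is bounded. To obtain~(i), I would test on $f=\chi_I$ with $I=(a,b)$: the explicit expression for $H\chi_I$ shows that, for each $\lambda>0$, the set $\{x\in I:|H\chi_I(x)|>\lambda\}$ contains an interval of length $\sim|I|e^{-\pi\lambda}$ abutting each endpoint of $I$, while for small $\lambda$ the set $\{|H\chi_I|>\lambda\}$ contains $(a-r,b+r)$ with $r\sim|I|/\lambda$. Feeding these into the boundedness of $H$ produces a family of inequalities for $W(u(\cdot))$ on intervals touching $\partial I$ and on large dilates of $I$ --- in particular forcing $u\,dx$ to be doubling --- and combining them with the threshold form of~\eqref{Ainfty} (for a suitable $\eta\in(0,1)$, $|E|\le\eta|I|$ implies $u(E)\le\tfrac12u(I)$) one deduces $u\in A_\infty$. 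To obtain~(ii), I would localize at a point $x_0$ of approximate continuity of $u$ with $u(x_0)>0$: on intervals $I_\varepsilon$ shrinking to $x_0$ the weight is essentially constant, so the restriction of $H$ to functions supported on $I_\varepsilon$ becomes, after normalization, the classical estimate $H:\Lambda^p(w)\to\Lambda^{p,\infty}(w)$, which by~\cite{s:s,n:n} forces $w\in B_{p,\infty}\cap B^*_\infty$, hence $w\in B^*_\infty$. To obtain~(iii), I would use the lower bound $H(f\chi_I)\gtrsim|I|^{-1}\int_I|f|$ on the interval of length $|I|$ adjacent to $I$: applying it with the $\{I_j\}$ the component intervals of $\{Mf>\lambda\}$, testing $H$ on $\sum_j\varepsilon_jf\chi_{I_j}$ with random signs $\varepsilon_j$ and averaging via Khintchine's inequality, and using $u\in A_\infty$ (both its doubling and~\eqref{Ainfty}) to pass from Lebesgue measure to $u$-measure, one gets $\|Mf\|_{\Lambda^{p,\infty}_u(w)}\lesssim\|Hf\|_{\Lambda^{p,\infty}_u(w)}\lesssim\|f\|_{\Lambda^p_u(w)}$.

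\emph{Main obstacle.} The delicate new point is the necessity of~(i): the characteristic-function tests only see the behaviour of $u$ near the endpoints of $I$ and on large dilates of $I$, so promoting this information to the full $A_\infty$ condition for arbitrary $E\subset I$ is where the real work lies (in contrast with~\cite{acs:acs}, where $u\in A_1$ was assumed at the outset). On the sufficiency side, the care is in running the good-$\lambda$ summation uniformly in the quasi-Banach range $0<p<\infty$, where conditions~(ii) and~(i) must cooperate to absorb, respectively, the $|Hf|$ error term and the measure-dilation in the $Mf$ term; the gluing of the local estimates in the necessity of~(iii) is the analogous technical core of that step.
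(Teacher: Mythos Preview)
Your proposal has a genuine gap in the necessity of~(i), which you yourself flag as the ``main obstacle'' without resolving it. Testing $H$ on $\chi_I$ only probes $W\circ u$ on intervals abutting $\partial I$ and on dilates of $I$; from this you can extract doubling of $W\circ u$ and the logarithmic estimate~\eqref{log}, but there is no route from such interval-endpoint information to the $A_\infty$ condition~\eqref{Ainfty} for \emph{arbitrary} measurable $E\subset I$. The paper's argument is quite different and does not proceed by characteristic-function tests at all: first, Cotlar's identity $(Hf)^2=f^2+2H(fHf)$ is used to bootstrap the weak bound at index $p$ to the \emph{strong} bound $H:\Lambda^r_u(w)\to\Lambda^r_u(w)$ for every $r>p$ (Theorem~\ref{ainfinito1}); then a duality argument (Lemmas~\ref{lema1}--\ref{lema2}, using $w\in B_{r,\infty}=B_r$ for $r>1$) yields the uniform Fefferman--Stein-type condition
\[
\sup_I \frac{1}{u(I)}\int_I |H(u\chi_I)(x)|\,dx<\infty;
\]
finally, the Zygmund comparison $\int_0^1 Mg\lesssim\int_0^1 g+\int_0^1|Cg|$ between the conjugate function and the maximal function, transported by dilation/translation to any interval, converts this into $\int_I M(u\chi_I)\lesssim u(I)$ for all $I$, a known characterization of $A_\infty$ (Theorem~\ref{ainfinito}). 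This chain --- Cotlar bootstrap, duality, then $H\Rightarrow M$ at the $L^1$ level --- is the missing idea.

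A few smaller points. Your claimed reformulation of $B^*_\infty$ as $W(\varepsilon t)\le C\varepsilon^{\vartheta}W(t)$ is false: $B^*_\infty$ only gives the logarithmic bound $W(\varepsilon t)/W(t)\lesssim(1+\log(1/\varepsilon))^{-q}$ (Proposition~\ref{Bstar}). Fortunately your good-$\lambda$ absorption only needs $\overline W(0^+)=0$, which \emph{is} $B^*_\infty$, so the argument can be repaired; but your truncation $u\mapsto u\chi_{(-N,N)}$ to force a priori finiteness destroys $A_\infty$, so the reduction step must be redone. The paper sidesteps all of this by invoking the Bagby--Kurtz rearrangement inequality $(H^*f)^*_u(t)\lesssim Q\big((Mf)^*_u\big)(t/4)$, valid for $u\in A_\infty$, and then the boundedness of $Q$ on $L^{p,\infty}_{\text{dec}}(w)$ for $w\in B^*_\infty$ (Corollary~\ref{Qweakweak Bstar}); this is cleaner in the quasi-Banach range. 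For the necessity of~(iii), your Khintchine/random-sign scheme is unconvincing as stated: the lower bound $|H(f\chi_{I_j})|\gtrsim|I_j|^{-1}\int_{I_j}f$ on an adjacent interval competes with contributions from all other $I_k$, and Khintchine-type inequalities in $\Lambda^{p,\infty}_u(w)$ are not available for free. The paper instead uses a deterministic selection (Lemma~\ref{ji}): after passing to a well-separated subfamily, one shows that on a translate $I_{i,j_i}$ of each $I_i$ the full sum $H(f\chi_{\cup I_i})$ is bounded below by $\lambda/8$, by a simple monotonicity-and-distance argument, and then compares $W(u(\cup I_{i,j_i}))$ with $W(u(\cup I_i))$ via doubling (Lemma~\ref{cubos}).
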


\begin{remark}

The necessity of the   condition $u\in A_\infty$ in (i) was, for us,  an unexpected result since  in the case of the Hardy-Littlewood maximal operator it  was proved in \cite{crs:crs}  that   $u\in A_\infty$,  or even the doubling property, was not necessary to have the corresponding weak-type boundedness; that is
$$
M:\Lambda^p_u(w) \to \Lambda^{p, \infty}_u(w) \not\Rightarrow u\in A_\infty. 
$$

\end{remark}

\begin{remark}\label{rarara}
It is worth mentioning  that the   characterization of the weak-type  boundedness of the Hardy-Littlewood maximal operator in terms of the weights $u$ and $w$ was left open in \cite{crs:crs}, for $p\ge 1$. The   case $p<1$  is given by the following condition \cite{crs:crs}: for every finite family of disjoint intervals $\{I_j\}_{j=1}^J$,
and every family of measurable sets $\{S_j\}_{j=1}^{J}$, with $S_j\subset I_j$, for every $j$,  we have that
\begin{equation*}\label{raposo}
\frac{W\left(u\left(\bigcup_{j=1}^J I_j\right)\right)}{W\left(u\left(\bigcup_{j=1}^J S_j\right)\right)}
       \leq C\max_{1\leq j\leq J} \left(\frac{|I_j|}{|S_j|}\right)^p.
\end{equation*}
\end{remark}

\ 

We list now several results  that are  important for our purposes  \cite{acs:acs, crs:crs}:

\begin{proposition}\label{properties}

\noindent
{\rm (a)} $\Lambda^{p}_{u}(w)$ and $\Lambda^{p,\infty}_{u}(w)$  are quasi-normed spaces if and only if $w$ satisfies  the $\Delta_2$ condition; that is,  for every $r>0$, 
\begin{equation}\label{d2}
W(2r)\le C W(r).
\end{equation}

\noindent
{\rm (b)} If $u\notin L^1(\mathbb R)$,  $w\notin L^1(\mathbb R^+)$ and $w\in \Delta_2$, then ${\mathcal{C}}^{\infty}_c(\R)$  is  dense in $\Lambda^p_u(w)$.
\end{proposition}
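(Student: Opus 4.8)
The plan is to handle the two parts separately, each by a standard route.

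For (a), the only nontrivial quasi-norm axiom is the quasi-triangle inequality, positive homogeneity and the implication $\|f\|=0\Rightarrow f=0$ being immediate from the definitions and the standing assumption $u>0$ a.e.; so I will show that the doubling condition $\Delta_2$ on $w$ is exactly what makes $\|f+g\|\le C(\|f\|+\|g\|)$ hold for both the strong and the weak norm. For the sufficiency I start from the Fubini identity
$$
\|f\|_{\Lambda^p_u(w)}^p=p\int_0^\infty y^{p-1}\,W\big(u(\{|f|>y\})\big)\,dy,
$$
use the inclusion $\{|f+g|>y\}\subseteq\{|f|>y/2\}\cup\{|g|>y/2\}$, the monotonicity of $W$, and the quasi-subadditivity $W(a+b)\le W(2\max(a,b))\le C\,W(\max(a,b))\le C\,(W(a)+W(b))$ that $\Delta_2$ provides; after the change of variables $y\mapsto y/2$ this gives $\|f+g\|_{\Lambda^p_u(w)}^p\le C\,2^p(\|f\|^p+\|g\|^p)$. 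For the weak space I instead combine the elementary rearrangement estimate $(f+g)^*_u(2t)\le f^*_u(t)+g^*_u(t)$ with $W(2t)\le C\,W(t)$ to obtain $(f+g)^*_u(2t)\,W(2t)^{1/p}\le C^{1/p}\big(\|f\|_{\Lambda^{p,\infty}_u(w)}+\|g\|_{\Lambda^{p,\infty}_u(w)}\big)$ and take the supremum over $t>0$. For the necessity I test the assumed quasi-triangle inequality on $f=\chi_A$, $g=\chi_B$ with $A,B$ disjoint and $u(A)=u(B)=r$ (such sets exist because the measure $u\,dx$ has no atoms, provided $u(\R)\ge2r$; when $u(\R)<\infty$ only the restriction of $w$ to $(0,u(\R))$ affects the norms and the argument is the same there): since $(\chi_A)^*_u=(\chi_B)^*_u=\chi_{[0,r)}$ and $(\chi_{A\cup B})^*_u=\chi_{[0,2r)}$, one reads off $W(2r)^{1/p}\le 2C\,W(r)^{1/p}$ in the strong case, and identically in the weak case (for characteristic functions the two norms coincide), i.e. $w\in\Delta_2$.

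For (b), note first that $\mathcal C^\infty_c(\R)\subset\Lambda^p_u(w)$ because such functions are bounded with bounded support and $u,w$ are locally integrable, so the statement is meaningful once part (a) makes $\Lambda^p_u(w)$ a quasi-normed space. The first and main step is density of simple functions: given $f\in\Lambda^p_u(w)$, the hypothesis $w\notin L^1(\R^+)$ forces $u(\{|f|>y\})<\infty$ for all $y>0$ (otherwise $f^*_u\ge y$ on $\R^+$ and $\|f\|=\infty$); with $f_n=f\,\chi_{[-n,n]}\chi_{\{|f|\le n\}}$ we have $|f-f_n|\downarrow0$ a.e., hence $(f-f_n)^*_u(t)\downarrow0$ for each $t>0$ by continuity of $u$ from above, while $(f-f_n)^*_u\le f^*_u$, so dominated convergence gives $\|f-f_n\|_{\Lambda^p_u(w)}\to0$. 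It thus suffices to approximate $\chi_E$ with $u(E)<\infty$: by outer regularity of $u\,dx$ choose an open $G\supseteq E$ with $u(G\setminus E)$ small, write $G$ as a countable disjoint union of open intervals, keep finitely many of them carrying most of the mass, enlarge each to a bounded interval, and obtain a finite union of bounded intervals $V$ with $u(E\triangle V)$ small; since $\|\chi_E-\chi_V\|_{\Lambda^p_u(w)}^p=W(u(E\triangle V))$ and $W$ is continuous at $0$, the error is small. A final mollification yields $\phi\in\mathcal C^\infty_c(\R)$, $0\le\phi\le1$, $\phi=1$ on $V$, supported in an $\varepsilon$-neighborhood $V_\varepsilon$, with $\|\chi_V-\phi\|_{\Lambda^p_u(w)}^p\le W(u(V_\varepsilon\setminus V))\to0$; the quasi-triangle inequality from (a), with its uniform constant (here $\Delta_2$ is again used), lets us concatenate the three approximations. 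The roles of the hypotheses are: $w\notin L^1(\R^+)$ gives finiteness of the distribution function of each element, and $u\notin L^1(\R)$ rules out obstructions coming from mass at infinity (e.g. the nonzero constants are not in $\Lambda^p_u(w)$).

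The step I expect to be most delicate is the reduction to simple functions in (b): decreasing rearrangements are not continuous under arbitrary pointwise convergence, so one must genuinely use the monotone convergence $|f-f_n|\downarrow0$ together with the finiteness of the distribution function (a consequence of $w\notin L^1(\R^+)$) before dominated convergence applies; the rest of (b) is then just outer regularity of $u\,dx$ and routine mollification, and (a) is essentially the classical interplay between rearrangement subadditivity and the doubling of $W$.
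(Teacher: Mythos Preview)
The paper does not actually prove this proposition: it is stated with a citation to \cite{acs:acs, crs:crs} and no proof is given in the text. So there is no ``paper's own proof'' to compare against; your task was really to supply one, and what you have written is a correct and standard argument.

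A few remarks on your write-up. In part~(a), your sufficiency arguments for both the strong and weak spaces are the usual ones and are fine; for the necessity, your test on disjoint sets $A,B$ with $u(A)=u(B)=r$ is the right idea, and you correctly flag the case $u\in L^1(\R)$, where only $w|_{(0,u(\R))}$ is relevant to the norms. In part~(b), your outline is sound: the key point, as you identify, is that $w\notin L^1(\R^+)$ forces $u(\{|f|>y\})<\infty$ for every $y>0$, which in turn makes the monotone truncation $f_n=f\chi_{[-n,n]}\chi_{\{|f|\le n\}}$ converge in norm via dominated convergence (since $(f-f_n)^*_u\le f^*_u$ and $(f-f_n)^*_u(t)\downarrow 0$ pointwise). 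One small gap in the exposition: between ``bounded with bounded support'' and ``it suffices to approximate $\chi_E$'' you should say explicitly that a bounded function with compact support is uniformly approximated by simple functions, and that uniform smallness together with $W(u(\mathrm{supp}))<\infty$ controls the $\Lambda^p_u(w)$ norm. Also, your stated role for the hypothesis $u\notin L^1(\R)$ (``rules out obstructions coming from mass at infinity'') is a bit vague; in fact your truncation argument does not visibly use it, and the hypothesis is more a normalization ensuring the full range of $W$ is seen by the space than an ingredient of the density proof itself.
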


 \begin{definition}
The  \textit{associate space}  of  $\Lambda^{p, \infty}_u(w)$, denoted as   $(\Lambda^{p, \infty}_u(w))'$, is defined as the set of all measurable functions $g$ such that 
$$
||g||_{(\Lambda^{p, \infty}_u(w))'}:= \sup_{f\in \Lambda^{p, \infty}_u(w)}\frac{\left|\displaystyle\int_{\mathbb R}f(x)g(x)u(x) dx\right| }{||f||_{\Lambda^{p, \infty}_u(w)}}<\infty.
$$
\end{definition}

In \cite{crs:crs},  these spaces were  characterized as follows:

\begin{proposition} \label{associate} \cite{crs:crs}
  If $0<p<\infty$, then 
$$
(\Lambda^{p, \infty}_u(w))'=\Lambda^{1}_u(W^{-1/p}).
$$
\end{proposition}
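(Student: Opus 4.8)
The plan is to compute the associate norm of $\Lambda^{p,\infty}_u(w)$ directly and recognize it as an elementary extremal problem. The key structural fact is that, since $u>0$ a.e. on $\mathbb R$, the measure $d\mu=u(x)\,dx$ is non-atomic and $\sigma$-finite, so the standard theory of rearrangements with respect to $\mu$ applies; in particular $(\mathbb R,\mu)$ is a resonant measure space in the sense of \cite{bs:bs}, and the Hardy--Littlewood inequality
$$
\left|\int_{\mathbb R}f(x)g(x)u(x)\,dx\right|\le\int_0^\infty f^*_u(t)\,g^*_u(t)\,dt
$$
holds for all measurable $f,g$. To prove $\|g\|_{(\Lambda^{p,\infty}_u(w))'}\le\|g\|_{\Lambda^1_u(W^{-1/p})}$, I would take $f$ with $\|f\|_{\Lambda^{p,\infty}_u(w)}=A$; then $f^*_u(t)\le A\,W(t)^{-1/p}$ for every $t>0$ by definition of the weak-type norm, so, since $g^*_u\ge 0$, the right-hand side above is at most $A\int_0^\infty W(t)^{-1/p}g^*_u(t)\,dt=A\,\|g\|_{\Lambda^1_u(W^{-1/p})}$. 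Dividing by $A$ and taking the supremum over $f$ gives the bound; the whole point is that $W$ is nondecreasing, so $W^{-1/p}$ is itself nonincreasing and is therefore the pointwise largest function allowed by the weak-type normalization.

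For the reverse inequality I would fix $g$ and, for $0<\varepsilon<N<\infty$ (with $W(\varepsilon)>0$), set $\phi_{\varepsilon,N}=W^{-1/p}\chi_{(\varepsilon,N)}$, a bounded nonincreasing function on $(0,\infty)$. By resonance of $(\mathbb R,\mu)$ there is $f\ge 0$ with $f^*_u=\phi_{\varepsilon,N}$ and $\int_{\mathbb R}f|g|\,u\,dx=\int_0^\infty\phi_{\varepsilon,N}(t)\,g^*_u(t)\,dt$. Replacing $f$ by $f\cdot\operatorname{sgn}(g)$ does not change $f^*_u$, hence not the norm, and
$$
\|f\operatorname{sgn}(g)\|_{\Lambda^{p,\infty}_u(w)}=\sup_{t>0}\phi_{\varepsilon,N}(t)W(t)^{1/p}=\sup_{\varepsilon<t<N}W(t)^{-1/p}W(t)^{1/p}=1 .
$$
Thus $\|g\|_{(\Lambda^{p,\infty}_u(w))'}\ge\int_\varepsilon^N W(t)^{-1/p}g^*_u(t)\,dt$, and letting $\varepsilon\to0^+$, $N\to\infty$ and invoking monotone convergence yields $\|g\|_{(\Lambda^{p,\infty}_u(w))'}\ge\|g\|_{\Lambda^1_u(W^{-1/p})}$; if the right-hand side is infinite, the same computation shows the associate norm is infinite as well.

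The step I expect to be the main obstacle is the construction in the second part: one must exhibit a single test function $f$ that simultaneously realizes the prescribed decreasing rearrangement $\phi_{\varepsilon,N}$ \emph{and} is arranged resonantly with $g$, so that the Hardy--Littlewood inequality becomes an equality — this is exactly where the non-atomicity of $u(x)\,dx$ is used. The truncations at $\varepsilon$ and $N$ are there to keep $\phi_{\varepsilon,N}$ bounded and compactly supported (so that $f\in\Lambda^{p,\infty}_u(w)$ and $\int|fg|u<\infty$), and they also absorb the degenerate cases $W(0^+)=0$ and $u\in L^1(\mathbb R)$, in which the relevant rearrangements live only on a bounded subinterval of $(0,\infty)$; these points need care but introduce no new ideas.
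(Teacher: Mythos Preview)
The paper does not prove this proposition at all: it is quoted from \cite{crs:crs} and used as a black box. Your argument via the Hardy--Littlewood inequality together with resonance of the non-atomic measure $u\,dx$ is precisely the standard route by which such associate-space identities are established, so there is nothing to compare.

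One small slip worth fixing: as written, $\phi_{\varepsilon,N}=W^{-1/p}\chi_{(\varepsilon,N)}$ vanishes on $(0,\varepsilon)$ and is therefore \emph{not} nonincreasing on $(0,\infty)$, hence cannot be the rearrangement $f^*_u$ of any $f$. Replace it by $\tilde\phi_{\varepsilon,N}(t)=\min\{W(\varepsilon)^{-1/p},\,W(t)^{-1/p}\}\chi_{(0,N)}(t)$, which is bounded, nonincreasing, still satisfies $\sup_t \tilde\phi_{\varepsilon,N}(t)W(t)^{1/p}\le 1$, and dominates your $\phi_{\varepsilon,N}$ pointwise; the remainder of the argument then goes through verbatim. (Also, resonance in \cite{bs:bs} yields the integral $\int_0^\infty \phi\, g^*_u\,dt$ as a supremum over equimeasurable choices of $f$ rather than an attained value, but since you only need a lower bound this is harmless.)
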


\begin{proposition}\label{NC}  \cite{acs:acs}
Assume that the Hilbert transform $H$ is well defined on $\Lambda^p_u(w)$ and that
\eqref{Hilbertantecedents} holds. Then,  we have the the following conditions: 

\smallskip

\noindent
{\rm (a)} $u\not\in L^1(\R)$ and $w\not\in L^1(\R^+)$.

\noindent
{\rm (b)} There exists $C>0$ such that, for every  measurable set $E$ and every interval $I$,  such that $E\subset I$,  we have that
$$
\frac{W(u( I ))}{W(u( E ))}\le C \left(\frac{|I|}{|E|}\right)^p.
$$
In particular, $W\circ u$ satisfies the doubling property; that is,  there exists a constant $c>0$ such that $W(u(2I))\leq c W(u(I))$,
for all intervals $I\subset\mathbb R$,  where $2I$ denotes the interval with the
same center as $I$ and double the size length. 

\noindent
{\rm (c)} $W$ is $p$ quasi-concave. In particular, $w\in\Delta_2$.

\noindent
{\rm (d)} $w\in B_{p, \infty}$.

\end{proposition}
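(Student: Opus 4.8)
The plan is to read off all four assertions from the weak-type inequality \eqref{Hilbertantecedents}, testing it against characteristic functions and using two elementary facts about the kernel. First, the explicit value $H\chi_{(a,b)}(x)=\frac1\pi\log\bigl|\frac{x-a}{x-b}\bigr|$; and second, the one-sided, no-cancellation lower bound: if $E\subset(c,d)$ and $x$ lies entirely to one side of $(c,d)$, then every factor $\frac{1}{x-y}$, $y\in E$, has the same sign, so $|H\chi_E(x)|=\frac1\pi\int_E\frac{dy}{|x-y|}\ge\frac1\pi\,\frac{|E|}{\sup_{y\in E}|x-y|}$. I would prove (b) (and the doubling of $W\circ u$) and (a) by such tests, obtain (d) as the main step, and finally deduce (c) from (d).

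For (a) I would test with $f=\chi_{(0,\infty)}$. Its Hilbert transform never exists, because for each $x>0$ the tail $\int_{x+\varepsilon}^{\infty}\frac{dy}{x-y}$ diverges and is not repaired by the principal value near $y=x$. Hence, if $H$ is well defined on $\Lambda^p_u(w)$, the function $\chi_{(0,\infty)}$ must \emph{not} lie in that space; since $f^*_u=\chi_{(0,\,u((0,\infty)))}$ and $\|f\|_{\Lambda^p_u(w)}^p=W\bigl(u((0,\infty))\bigr)$, finiteness of this norm would force a contradiction, and it is finite as soon as either $u((0,\infty))<\infty$ or $W(\infty)<\infty$. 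This rules out both possibilities and yields $u\notin L^1(\R)$ and $w\notin L^1(\R^+)$. For (b), fix $I=(a,b)$ of length $\ell$ and $E\subset I$, and test with $f=\chi_E$, so $\|f\|_{\Lambda^p_u(w)}=W(u(E))^{1/p}$. For $x$ in the left-adjacent interval $I_L=(a-\ell,a)$ we have $0<|x-y|<2\ell$ for all $y\in E$, hence $|H\chi_E(x)|\ge\frac{|E|}{2\pi\ell}$ on all of $I_L$; reading this in $\Lambda^{p,\infty}_u(w)$ gives $\frac{|E|}{\ell}W(u(I_L))^{1/p}\lesssim\|H\chi_E\|_{\Lambda^{p,\infty}_u(w)}\le C\|f\|$, that is $W(u(I_L))\le C(\ell/|E|)^pW(u(E))$. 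Applying the same estimate to the full interval $F=I_L$ with target $I$ (its equal-length right neighbour) gives the comparability $W(u(I))\le C\,W(u(I_L))$; combining the two yields (b). Doubling is then immediate: apply (b) to the set $I$ inside the interval $2I$, where $|2I|/|I|=2$, to get $W(u(2I))\le C2^p\,W(u(I))$.

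The main obstacle is (d). Here the target is a condition on $w$ alone, so the weight $u$ must be factored out, and I would show that \eqref{Hilbertantecedents} forces the weak-type Hardy inequality $P:L^p_{\text{dec}}(w)\to L^{p,\infty}(w)$, i.e.\ $w\in B_{p,\infty}$. The strategy mirrors Sawyer's necessity argument for the unweighted case $u=1$ \cite{s:s}: given a decreasing $h\in L^p_{\text{dec}}(w)$, construct a function $f$ on $\R$, supported on a lacunary union of intervals whose $u$-measures are dyadically spaced and chosen so that $f^*_u=h$ (possible since $u>0$ a.e.\ and, by (a), $u\notin L^1$). For such an arrangement the one-sided, no-cancellation part of $Hf$ dominates a fixed multiple of $Ph$, read through the $u$-distribution, while the cross terms and the principal-value contributions are absorbed using the lacunary separation and the doubling of $W\circ u$ from (b). Feeding this into the weak inequality for $H$ returns $\sup_{t}Ph(t)W(t)^{1/p}\le C\|h\|_{L^p(w)}$, which is exactly $w\in B_{p,\infty}$. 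Controlling those error terms uniformly across scales is the delicate point of the whole proposition.

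Finally, (c) follows from (d) through the equivalences recalled in the introduction. If $p\le1$, then $w\in B_{p,\infty}$ is precisely the $p$ quasi-concavity of $W$. If $p>1$, then $B_{p,\infty}=B_p$, and $B_p$ gives $p$ quasi-concavity since for $0<r<t$
\[
W(t)-W(r)=\int_r^t s^p\,\frac{w(s)}{s^p}\,ds\le t^p\int_r^{\infty}\frac{w(s)}{s^p}\,ds\le C\,\frac{t^p}{r^p}\,W(r),
\]
so that $W(t)/t^p\le (1+C)\,W(r)/r^p$. Taking $t=2r$ then gives $w\in\Delta_2$, completing (c) and the proposition.
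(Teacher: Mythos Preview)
The paper does not prove this proposition here; it is quoted from \cite{acs:acs}, so there is no in-paper argument to compare against. I can only assess your proposal on its own merits.

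Your arguments for (a) and (b) are sound. For (a), the observation that $H\chi_{(0,\infty)}$ fails to exist anywhere, combined with the hypothesis that $H$ is well defined on $\Lambda^p_u(w)$, forces $W(u((0,\infty)))=\infty$; since this quantity is finite whenever either $u((0,\infty))<\infty$ or $W(\infty)<\infty$, both must fail, giving (a). For (b), the one-sided kernel lower bound on the adjacent interval $I_L$, followed by the comparability $W(u(I))\lesssim W(u(I_L))$ obtained by the same device with roles reversed, is a clean and correct route; doubling of $W\circ u$ then drops out by taking $E=I$ inside $2I$.

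The gap is in (d). What you have written is a programme, not a proof: you propose to transplant a decreasing $h$ onto a lacunary family of intervals with prescribed $u$-measures so that $f^*_u=h$, and then claim that $|Hf|$, read through the $u$-distribution, dominates $Ph$ up to errors that are ``absorbed using the lacunary separation and the doubling of $W\circ u$''. But this is precisely where all the content lies. The cross terms in $Hf$ coming from distant blocks do not cancel and are not automatically small; bounding them uniformly in the number of blocks and in the level, for a general weight $u$ that you know nothing about beyond (a) and the doubling of $W\circ u$, is a genuine piece of work. You yourself flag this as ``the delicate point of the whole proposition'' and then stop. As written, (d) is not established. Note also that (d) does not follow from (b): the inequality in (b) compares $W(u(I))$ and $W(u(E))$ against powers of $|I|/|E|$, whereas $B_{p,\infty}$ requires control of $W(s)/W(t)$ against powers of $s/t$, and without further information on $u$ there is no way to convert one ratio into the other.

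Your derivation of (c) from (d) is correct: for $p\le1$ the two conditions coincide, and for $p>1$ your short chain
\[
W(t)-W(r)\le t^p\int_r^\infty\frac{w(s)}{s^p}\,ds\le C\,\frac{t^p}{r^p}\,W(r)
\]
gives $p$ quasi-concavity from $B_p$; $\Delta_2$ follows by taking $t=2r$. So once (d) is in hand, the rest stands.
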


As usual, we shall use the symbol $A\lesssim B$ to indicate that there exists a universal positive constant $C$,  independent of all important parameters,  such that $A\le C B$. $A\approx B$ means  that 
$A\lesssim B$ and $B\lesssim A$. 

Taking into account Proposition \ref{NC}, we shall assume  from now on, and without loss of generality, that 
$$
w\in \Delta_2, \quad u\notin L^1(\mathbb R) \quad\text{and}\quad w\notin L^1(\mathbb R^+).
$$

Also, we want to emphasize that, for a weight $u$ in $\mathbb R$ we say that $u$ satisfies the doubling property or $u\in\Delta_2$ if, for every interval $I$, $u(2I)\lesssim u(I)$, while in the case of a weight $w$ in $\mathbb R^+$, the condition $w\in\Delta_2$ is given by (\ref{d2}).

\,

Let us start by giving some important facts of each class of weights appearing in our  results.  

\section{Several classes of weights}

\subsection{The  $B^*_\infty$ class}

\ 

In this section we shall  study    weights satisfying (\ref{binfty}) and we shall prove  several properties that will be fundamental for our further results. 

\begin{lemma}\label{clickBstar}
Let $\varphi:(0,1]\to[0,1]$ be an increasing  submultiplicative function such that $\varphi(\lambda)<1$,  for some $\lambda\in(0,1)$. Then,
$$
\varphi(x)\lesssim \frac{1}{1+\log(1/x)}.
$$
\end{lemma}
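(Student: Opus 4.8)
The plan is to exploit the submultiplicativity and the single point where $\varphi$ is strictly less than $1$ to get geometric decay along a sequence, and then interpolate between consecutive points of that sequence using monotonicity.

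First I would fix $\lambda\in(0,1)$ with $\varphi(\lambda)=:c<1$. Submultiplicativity gives $\varphi(\lambda^n)\le\varphi(\lambda)^n=c^n$ for every $n\ge 1$ (and $\varphi(\lambda^0)=\varphi(1)\le 1$ trivially, since the range is $[0,1]$). Now given $x\in(0,1]$, choose the unique integer $n\ge 0$ with $\lambda^{n+1}<x\le\lambda^n$. Since $\varphi$ is increasing, $\varphi(x)\le\varphi(\lambda^n)\le c^n$. On the other hand, the condition $\lambda^{n+1}<x$ means $(n+1)\log(1/\lambda)>\log(1/x)$, i.e. $n>\log(1/x)/\log(1/\lambda)-1$, so that $n\gtrsim 1+\log(1/x)$ up to additive and multiplicative constants depending only on $\lambda$. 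Writing $c^n=e^{-n\log(1/c)}$ and using that $e^{-at}\lesssim_a 1/(1+t)$ for $t\ge 0$, we conclude $\varphi(x)\le c^n\lesssim \frac{1}{1+n}\lesssim\frac{1}{1+\log(1/x)}$, with all implied constants depending only on $\lambda$ and $c=\varphi(\lambda)$.

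The only mild subtlety is bookkeeping the constants so that the final bound is genuinely of the stated form $\varphi(x)\lesssim 1/(1+\log(1/x))$ uniformly in $x\in(0,1]$: for $x$ close to $1$ the quantity $1+\log(1/x)$ is close to $1$ and the inequality is trivial because $\varphi\le 1$, so the content is entirely in the regime $x\to 0^+$, which is exactly where the geometric decay $c^n$ kicks in. I do not expect any real obstacle here; the one thing to be careful about is that submultiplicativity is only assumed on $(0,1]$ with values in $[0,1]$, so one must check that $\lambda^n\in(0,1]$ (clear) and that iterating $\varphi(\lambda\cdot\lambda^{n-1})\le\varphi(\lambda)\varphi(\lambda^{n-1})$ stays within the domain at each step (also clear). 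The elementary inequality $e^{-at}\le C_a/(1+t)$ can be invoked without proof.
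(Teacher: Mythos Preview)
Your proof is correct and follows essentially the same route as the paper: both arguments pick the integer $k$ with $x$ between $\lambda^{k+1}$ and $\lambda^k$, use monotonicity and submultiplicativity to obtain $\varphi(x)\le\varphi(\lambda)^k$, and then invoke the elementary fact that $\varphi(\lambda)^k(1+(k+1)\log(1/\lambda))$ is bounded in $k$ (your formulation $e^{-at}\lesssim 1/(1+t)$ is the same thing). The only difference is cosmetic bookkeeping of constants.
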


\begin{proof}

Since $0<\lambda<1$, given $x\in (0, 1)$, there exists $k\in\mathbb N\cup\{0\}$ such that $x\in [\lambda^{k+1}, \lambda^k)$ and, using that $\varphi(\lambda)<1$, it is clear that 
$$
\sup_{j\in\mathbb N}  \varphi(\lambda)^j {\big(1+(j+1)\log(1/\lambda)\big)}=C_{\lambda}<\infty.
$$
Therefore, 
$$
\varphi(x)\le \varphi(\lambda^k)\le \varphi(\lambda)^k\lesssim \frac{1}{1+(k+1)\log(1/\lambda)}\lesssim  \frac{1}{1+\log(1/x)},
$$
as we wanted to see. 
\end{proof}

\begin{corollary}\label{elcor}
If $\varphi:(0,1]\to[0,1]$ is an increasing submultiplicative function, the following conditions are equivalent:
 
 \noindent
{\rm (1)} There exists $\lambda\in(0,1)$ such that $\varphi(\lambda)<1$.

	 \noindent
{\rm (2)}  $\varphi(x)\lesssim  \big(1+\log(1/x)\big)^{-1}$.

 \noindent
{\rm (3)} Given $p>0$, $\varphi(x)\lesssim  \big(1+\log(1/x)\big)^{-p}$.

 \noindent
{\rm (4)} $\displaystyle \lim_{x\to 0}\varphi(x)=0$.
\end{corollary}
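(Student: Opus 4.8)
The plan is to establish the cycle of implications $(1)\Rightarrow(3)\Rightarrow(2)\Rightarrow(4)\Rightarrow(1)$, of which every link but the first is essentially immediate. Indeed, $(3)\Rightarrow(2)$ is nothing but the special case $p=1$ of $(3)$; $(2)\Rightarrow(4)$ follows because the right-hand side of the estimate in $(2)$ tends to $0$ as $x\to 0^{+}$, and hence so does $\varphi(x)$; and $(4)\Rightarrow(1)$ is trivial, since $\lim_{x\to 0^{+}}\varphi(x)=0$ forces $\varphi(\lambda)<1$ for every sufficiently small $\lambda\in(0,1)$.

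Thus the only implication that needs an argument is $(1)\Rightarrow(3)$, and for this I would simply rerun the proof of Lemma \ref{clickBstar}, carrying along an arbitrary power $p>0$ in place of the power $1$. Fix $p>0$. The elementary point is that, since $0<\varphi(\lambda)<1$, exponential decay beats polynomial growth, so that
$$
C_{\lambda,p}:=\sup_{j\in\mathbb N}\varphi(\lambda)^{j}\bigl(1+(j+1)\log(1/\lambda)\bigr)^{p}<\infty .
$$
Then, given $x\in(0,1)$, pick $k\in\mathbb N\cup\{0\}$ with $x\in[\lambda^{k+1},\lambda^{k})$; using the monotonicity and submultiplicativity of $\varphi$ together with $\log(1/x)\le (k+1)\log(1/\lambda)$ one obtains
$$
\varphi(x)\le\varphi(\lambda^{k})\le\varphi(\lambda)^{k}\lesssim\bigl(1+(k+1)\log(1/\lambda)\bigr)^{-p}\lesssim\bigl(1+\log(1/x)\bigr)^{-p},
$$
which is precisely $(3)$.

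I do not anticipate any genuine obstacle here: the whole substance is the observation $\sup_{j\ge 1}t^{j}j^{p}<\infty$ for $0<t<1$, which is exactly what makes $C_{\lambda,p}$ finite and lets the computation of Lemma \ref{clickBstar} go through verbatim for each $p>0$. (Alternatively, $(1)\Rightarrow(3)$ could be deduced from $(1)\Rightarrow(2)$, i.e.\ from Lemma \ref{clickBstar} itself: for $p\le 1$ it is immediate on $(0,1)$ since $(1+\log(1/x))^{-1}\le(1+\log(1/x))^{-p}$, while for $p>1$ one writes $x=y^{m}$ with $m=\lceil p\rceil$ and combines $(2)$ applied to $y$ with $\varphi(x)\le\varphi(y)^{m}$; but iterating the proof of the lemma is the cleaner route.)
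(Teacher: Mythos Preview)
Your proof is correct and follows essentially the same logical structure as the paper's: the easy implications are dispatched immediately, and the only substantive step is $(1)\Rightarrow(3)$, which in both cases rests on Lemma~\ref{clickBstar}. The one difference is in how that step is executed: you rerun the computation of the lemma with the exponent $p$ built in (via the finiteness of $C_{\lambda,p}$), whereas the paper simply applies Lemma~\ref{clickBstar} directly to the function $\psi=\varphi^{1/p}$, which is again increasing, submultiplicative, and satisfies $\psi(\lambda)<1$, yielding $\psi(x)\lesssim(1+\log(1/x))^{-1}$ and hence $(3)$ upon raising to the $p$th power. The paper's trick is a bit slicker since it avoids reopening the proof of the lemma, but your argument is equally valid.
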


\begin{proof} Clearly $(2)$, $(3)$ and $(4)$ imply $(1)$  and,    $(2)$  and $(3)$ imply $(4)$. On the other hand, by Lemma \ref{clickBstar}, $(1)$ implies $(2)$. Hence, it only remains to prove that $(1)$ implies $(3)$. Suppose that $\varphi(\lambda)<1$ and take $p>0$. If $\psi=\varphi^{1/p}$, then $\psi$ is also  increasing, submultiplicative and $\psi(\lambda)<1$, and by  Lemma \ref{clickBstar} we get  (3).
\end{proof}

In what follows, the following function will play an important role, 

\begin{equation*}\label{vb}
\overline W(t)=\sup_{s>0}\frac{W(st)}{W(s)}. 
\end{equation*}

\begin{proposition}\label{Bstar}
 The following statements are equivalent (see also \cite{k:k}): 
 
 \noindent
{\rm (i)} $w\in B^*_\infty$.

 \noindent
{\rm (ii)} There exists $\lambda\in(0,1)$ such that $\overline{W}(\lambda)<1$.
	
	 \noindent
{\rm (iii)}    $\displaystyle\frac{W(t)}{W(s)}\lesssim  \big(1+\log(s/t)\big)^{-1}$, for all $0<t\leq s$.
	
	 \noindent
{\rm (iv)} Given $p>0$, $\displaystyle\frac{W(t)}{W(s)}\lesssim  \big(1+\log(s/t)\big)^{-p}$,
	              for all $0<t\leq s$.
	              
	 \noindent
{\rm (v)} $\overline{W}(0^+)=0$.
  
   \noindent
{\rm (vi)}  For every $\varepsilon>0$, there exists $\delta>0$ such that $W(t)\leq \varepsilon W(s)$, provided $t\leq \delta s$.
\end{proposition}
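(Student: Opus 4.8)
The plan is to set up a cycle of implications among the six conditions, with the function $\overline W$ serving as the bridge to the submultiplicative machinery of Corollary \ref{elcor}. First I would record the elementary fact that $\overline W:(0,1]\to[0,1]$ is increasing and submultiplicative: increasing is immediate from the definition, and submultiplicativity follows because
$$
\frac{W(st_1t_2)}{W(s)}=\frac{W((st_2)t_1)}{W(st_2)}\cdot\frac{W(st_2)}{W(s)}\le \overline W(t_1)\,\overline W(t_2),
$$
after taking the supremum over $s>0$; boundedness by $1$ holds because $W$ is increasing. With this in hand, the equivalences $(ii)\Leftrightarrow(v)$ and, for any fixed $p>0$, $(ii)\Leftrightarrow(iv)$ and $(ii)\Leftrightarrow(iii)$ are exactly the instances $(1)\Leftrightarrow(4)$, $(1)\Leftrightarrow(3)$, $(1)\Leftrightarrow(2)$ of Corollary \ref{elcor} applied to $\varphi=\overline W$, once one observes that $\overline W(x)\lesssim(1+\log(1/x))^{-q}$ is the same statement as $W(t)/W(s)\lesssim(1+\log(s/t))^{-q}$ uniformly in $0<t\le s$ (write $x=t/s$ and take the supremum/infimum over $s$). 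So the bulk of the list collapses onto Corollary \ref{elcor}, and $(iii)\Rightarrow(iv)$, $(iv)\Rightarrow(iii)$ in particular pass through it for free.

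Next I would close the loop with $B^*_\infty$. For $(vi)\Leftrightarrow(ii)$: condition $(vi)$ says precisely that $\overline W(\delta)\le\varepsilon$ for a suitable $\delta=\delta(\varepsilon)$, i.e. $\overline W(0^+)=0$, which is $(v)$, already known equivalent to $(ii)$; conversely $(v)$ gives $(vi)$ directly. The genuine content is the equivalence of $(i)$ with the rest. For $(iv)\Rightarrow(i)$ (say with $p=2$, or any $p>1$), I would estimate
$$
\int_0^r\frac1t\int_0^t w(s)\,ds\,dt=\int_0^r\frac{W(t)}{t}\,dt
\lesssim W(r)\int_0^r\frac{dt}{t\,(1+\log(r/t))^{p}},
$$
and the substitution $u=\log(r/t)$ turns the last integral into $\int_0^\infty(1+u)^{-p}\,du<\infty$, giving $(i)$. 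For the reverse direction $(i)\Rightarrow(ii)$, I would show $B^*_\infty$ forces $\overline W(\lambda)<1$ for some $\lambda<1$: starting from $\int_0^r W(t)/t\,dt\le C\,W(r)$, split the integral at $\lambda r$ and use monotonicity of $W$ on $[\lambda r, r]$,
$$
C\,W(r)\ge \int_{\lambda r}^r\frac{W(t)}{t}\,dt\ge W(\lambda r)\log(1/\lambda),
$$
so $W(\lambda r)/W(r)\le C/\log(1/\lambda)$ for all $r$; choosing $\lambda$ small enough that $C/\log(1/\lambda)<1$ yields $\overline W(\lambda)<1$, which is $(ii)$.

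The step I expect to require the most care is the passage between the pointwise-ratio formulations and $\overline W$, namely checking that taking the supremum over the scaling parameter $s$ is harmless and that the implicit constants are genuinely uniform — this is where one must be careful that "$\lesssim$" in $(iii)$–$(iv)$ is quantified over all $0<t\le s$ with one constant, matching the single-variable statement for $\overline W$. Everything else is bookkeeping: the submultiplicativity computation, the two elementary integral estimates in $(i)\Leftrightarrow(iv)$, and the purely cosmetic rephrasings $(v)\Leftrightarrow(vi)$. Once the dictionary $\overline W(t/s)\leftrightarrow W(t)/W(s)$ is set up cleanly, the proposition is essentially a corollary of Corollary \ref{elcor} together with one short argument in each direction linking it to the defining inequality \eqref{binfty}.
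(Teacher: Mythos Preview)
Your proposal is correct and follows essentially the same route as the paper: both reduce the equivalences among (ii)--(v) to Corollary~\ref{elcor} applied to $\varphi=\overline W|_{(0,1]}$, handle (v)$\Leftrightarrow$(vi) by the obvious rephrasing, and link $B^*_\infty$ to the rest via the same two integral estimates (the paper derives (iii) directly from (i) using $W(s)\log(r/s)\le\int_s^r W(t)/t\,dt$, while you derive (ii), but this is the identical inequality with $s=\lambda r$). Your explicit verification of the submultiplicativity of $\overline W$ is a useful addition, since the paper merely asserts it.
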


\begin{proof}
Since  $\overline{W}$ is submultiplicative  we have,  by Corollary \ref{elcor} and letting $\varphi= \overline{W}_{|(0, 1]}$,  the  equivalences between (ii), (iii), (iv) and (v).  Also,  note that if (vi) holds,  then taking  $\lambda={t}/{s}$,  we get
${W(\lambda s)}\le\varepsilon {W(s)}$, for every $s\in[0,\infty)$ if $\lambda\le\delta$, and  hence we get (v). On the other hand, taking $t\leq \lambda s$, we get, by (v), that ${W(t)}\le\varepsilon {W(s)}$ whenever $t\leq \delta s$.

Now, if (i) holds,  for every $s\le r$, 
$$
W(s) \log \frac rs\le \int_s^r \frac{W(t)} t dt\lesssim W(r), 
$$
and since $W$ is increasing we deduce that  $W(s) (1+ \log \frac rs)\lesssim W(r)$, and (iii) holds.  On the other hand  if (iv) holds with $p=2$, then
$$
\int_0^r \frac{W(t)}t dt\lesssim W(r) \int_0^r   \Big(1+\log(r/t)\Big)^{-2} \frac{dt}t \lesssim W(r),
$$
and hence (i) holds. 
\end{proof}

\begin{proposition} \cite{k:k, n:n} \label{negBinfty}
Let $Q$ be the conjugate Hardy operator defined by 
$$
Qf(t)=\int_t^\infty f(s) \frac{ds}s.
$$
Then, for every  $0 < p < \infty$, 
 $$
 Q:L^p_{\text{\rm dec}}(w) \rightarrow   L^{p,\infty}(w) \quad\iff\quad w\in B^*_{\infty} \quad\iff\quad Q:L^p_{\text{\rm dec}}(w) \rightarrow   L^{p}(w). 
 $$
 \end{proposition}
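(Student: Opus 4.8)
The plan is to prove the chain
$$Q:L^p_{\text{dec}}(w)\to L^p(w)\ \Longrightarrow\ Q:L^p_{\text{dec}}(w)\to L^{p,\infty}(w)\ \Longrightarrow\ w\in B^*_\infty\ \Longrightarrow\ Q:L^p_{\text{dec}}(w)\to L^p(w),$$
the first implication being trivial since $\|g\|_{L^{p,\infty}(w)}\le\|g\|_{L^p(w)}$. In the third step we may of course assume $f\in L^p_{\text{dec}}(w)$ with $\int_0^\infty f^pw<\infty$, the inequality being vacuous otherwise.

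\emph{Weak-type boundedness forces $w\in B^*_\infty$.} I would test the hypothesis on $f=\chi_{(0,r)}$ (decreasing, with $\|f\|_{L^p(w)}^p=W(r)$). One computes $Qf(t)=\log^+(r/t)$, a decreasing function whose $w$-distribution function is $w(\{Qf>\lambda\})=W(re^{-\lambda})$, so the assumed inequality becomes $\sup_{\lambda>0}\lambda\,W(re^{-\lambda})^{1/p}\lesssim W(r)^{1/p}$; equivalently,
$$(\log(r/s))^p\,W(s)\lesssim W(r),\qquad 0<s<r.$$
Putting $s=\lambda r$ and choosing $\lambda\in(0,1)$ small enough that the implied constant times $(\log(1/\lambda))^{-p}$ is $<1$ gives $\overline W(\lambda)<1$, whence $w\in B^*_\infty$ by Proposition \ref{Bstar}.

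\emph{$w\in B^*_\infty$ gives strong-type boundedness.} The starting point is the pointwise bound, valid for every decreasing $f\ge0$, obtained by splitting $(t,\infty)$ into the annuli $[2^kt,2^{k+1}t)$ and using monotonicity:
$$Qf(t)=\int_t^\infty f(s)\,\frac{ds}{s}\ \le\ (\log 2)\sum_{k\ge0}f(2^kt).$$
It then suffices to control $\int_0^\infty f(2^kt)^pw(t)\,dt$ for each $k$. Here I would use that $w\in B^*_\infty$ forces $w\notin L^1(\mathbb{R}^+)$ (otherwise $\int_0^r W(t)/t\,dt$ is unbounded while $W$ is bounded), so every admissible $f$ has $f(\infty)=0$; writing $h=f^p$, taken right-continuous and decreasing with $h(\infty)=0$, so that $h(u)=\mu((u,\infty))$ with $\mu=-dh\ge0$, Fubini's theorem yields
$$\int_0^\infty f(2^kt)^p w(t)\,dt=\int_{(0,\infty)}W(2^{-k}r)\,d\mu(r),$$
and I expect Proposition \ref{Bstar}(iv) with exponent $p+1$ to bound the right-hand side by $\lesssim(1+k)^{-(p+1)}\int_{(0,\infty)}W(r)\,d\mu(r)=(1+k)^{-(p+1)}\int_0^\infty f^pw$. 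Summing in $k$ then finishes: for $0<p\le1$ the subadditivity $(\sum_k a_k)^p\le\sum_k a_k^p$ gives $\int_0^\infty Qf^pw\lesssim\big(\sum_{k\ge0}(1+k)^{-(p+1)}\big)\int_0^\infty f^pw$, and for $p>1$ Minkowski's inequality in $L^p(w)$ gives $\|Qf\|_{L^p(w)}\lesssim\big(\sum_{k\ge0}(1+k)^{-(p+1)/p}\big)\|f\|_{L^p(w)}$; in both cases the series converges because $p+1>1$ and $(p+1)/p>1$ for every $p>0$. This closes the cycle and in particular gives the strong-type endpoint.

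The main obstacle is the per-$k$ estimate $\int_0^\infty f(2^kt)^pw(t)\,dt\lesssim(1+k)^{-(p+1)}\int_0^\infty f^pw$: the decay in $k$ must genuinely be extracted from $B^*_\infty$ — in the form of the power-type comparison $W(2^{-k}r)\lesssim(1+k)^{-(p+1)}W(r)$ — and the reduction to it must be robust for arbitrary decreasing $f$, which is exactly what the passage to $h=f^p$ as the tail of a positive measure achieves. One should note that the naive alternative of discretizing $W$ on a fixed geometric grid fails: a weight in $B^*_\infty$ can still grow arbitrarily fast to the right, and that discretization loses an unbounded factor; the dyadic-plus-layer-cake argument above avoids this because it only ever compares $W$ at $2^{-k}r$ with $W$ at $r$.
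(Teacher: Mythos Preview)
The paper does not supply a proof of this proposition: it is quoted from the references of Andersen and Neugebauer, so there is no ``paper's own proof'' to compare against. That said, your argument is correct and self-contained.

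A few remarks. Your test with $f=\chi_{(0,r)}$ to extract $(\log(r/s))^pW(s)\lesssim W(r)$ and then invoke Proposition~\ref{Bstar}(ii) is the natural route and is carried out cleanly. For the sufficiency, the dyadic bound $Qf(t)\le(\log 2)\sum_{k\ge0}f(2^kt)$ together with the layer-cake identity $\int_0^\infty f(2^kt)^p w(t)\,dt=\int_{(0,\infty)}W(2^{-k}r)\,d\mu(r)$ and Proposition~\ref{Bstar}(iv) with exponent $p+1$ gives exactly the summable decay you need; the split into $0<p\le1$ (subadditivity of $x\mapsto x^p$) and $p>1$ (Minkowski) is correct, and both series converge. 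Your justification that $w\in B^*_\infty$ forces $w\notin L^1(\mathbb R^+)$, hence $f(\infty)=0$ for $f\in L^p_{\text{dec}}(w)$, is also right: if $W(\infty)<\infty$ then $\int_1^r W(t)/t\,dt\to\infty$ while the right-hand side of \eqref{binfty} stays bounded. The only cosmetic point is the reuse of the symbol $\lambda$ in the second step for two different quantities; otherwise the write-up is sound.
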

 
 Using now interpolation on the cone of decreasing functions \cite{cm:cm}, we obtain the following corollary: 

\begin{corollary} \label{Qweakweak Bstar}
Let $0<p<\infty$. Then,
$$
w\in B^{*}_{\infty} \quad\iff\quad Q:L^{p,\infty}_{\text{\rm dec}}(w) \rightarrow L^{p,\infty}(w). 
$$
\end{corollary}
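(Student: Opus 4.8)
The plan is to prove the two directions of the equivalence separately. The implication ``$\Leftarrow$'' is immediate: if $Q\colon L^{p,\infty}_{\text{dec}}(w)\to L^{p,\infty}(w)$ is bounded, then, since $\|g\|_{L^{p,\infty}(w)}\le\|g\|_{L^{p}(w)}$ for every $g$, the inclusion $L^{p}_{\text{dec}}(w)\hookrightarrow L^{p,\infty}_{\text{dec}}(w)$ is bounded; composing with $Q$ we get that $Q\colon L^{p}_{\text{dec}}(w)\to L^{p,\infty}(w)$ is bounded, and Proposition~\ref{negBinfty} yields $w\in B^{*}_{\infty}$.

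For the converse, assume $w\in B^{*}_{\infty}$. First recall that for a nonnegative decreasing function $g$ on $\R^{+}$ one has, using that $W$ is continuous, $\|g\|_{L^{p,\infty}(w)}=\sup_{t>0}g(t)W(t)^{1/p}$ (indeed $\{g>\la\}$ is an interval $(0,\tau_{\la})$ with $w$-measure $W(\tau_\la)$, and the two formulas match up by continuity of $W$). Let $f\ge 0$ be decreasing with $\|f\|_{L^{p,\infty}(w)}=c$; then $f(t)\le c\,W(t)^{-1/p}$ for a.e.\ $t$. Since $Q$ preserves positivity and order and $Qf$ is again decreasing, it suffices to bound $Q(W^{-1/p})(t)=\int_{t}^{\infty}W(s)^{-1/p}\,\frac{ds}{s}$ pointwise by a constant times $W(t)^{-1/p}$, because then $\|Qf\|_{L^{p,\infty}(w)}=\sup_{t}Qf(t)W(t)^{1/p}\le c\sup_{t}Q(W^{-1/p})(t)W(t)^{1/p}\lesssim c$.

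Here is the estimate. Fix any exponent $q>p$. By Proposition~\ref{Bstar}(iv), $W(t)/W(s)\lesssim(1+\log(s/t))^{-q}$ for all $0<t\le s$, so
$$
Q(W^{-1/p})(t)=W(t)^{-1/p}\int_{t}^{\infty}\Big(\frac{W(t)}{W(s)}\Big)^{1/p}\frac{ds}{s}\lesssim W(t)^{-1/p}\int_{t}^{\infty}\big(1+\log(s/t)\big)^{-q/p}\frac{ds}{s}.
$$
The substitution $s=te^{v}$ turns the last integral into $\int_{0}^{\infty}(1+v)^{-q/p}\,dv=\frac{p}{q-p}<\infty$, because $q/p>1$. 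Thus $Q(W^{-1/p})(t)\lesssim W(t)^{-1/p}$, and by the reduction above $Q\colon L^{p,\infty}_{\text{dec}}(w)\to L^{p,\infty}(w)$ is bounded.

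The only genuinely delicate point in the forward direction is the choice of the exponent $q$: membership in $B^{*}_{\infty}$ is independent of the exponent, so Proposition~\ref{Bstar}(iv) supplies logarithmic decay of any prescribed order, and taking $q>p$ is exactly what absorbs the extra factor $1/s$ in the definition of $Q$. Alternatively, one can argue by real interpolation on the cone of decreasing functions, as in \cite{cm:cm}: choose $q_{0}<p<q_{1}$, use that $w\in B^{*}_{\infty}$ gives $Q\colon L^{q_{i}}_{\text{dec}}(w)\to L^{q_{i}}(w)$ for $i=0,1$ by Proposition~\ref{negBinfty}, and interpolate, identifying $(L^{q_{0}}_{\text{dec}}(w),L^{q_{1}}_{\text{dec}}(w))_{\theta,\infty}=L^{p,\infty}_{\text{dec}}(w)$ and $(L^{q_{0}}(w),L^{q_{1}}(w))_{\theta,\infty}=L^{p,\infty}(w)$ with $1/p=(1-\theta)/q_{0}+\theta/q_{1}$; in that route the subtlety is instead the identification of the $K$-functional of the couple when restricted to the cone of decreasing functions.
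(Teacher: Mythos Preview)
Your proof is correct. The paper, however, does not give a direct argument: it simply invokes interpolation on the cone of decreasing functions from \cite{cm:cm}, which is exactly the alternative route you sketch in your final paragraph. Your primary argument is more elementary and self-contained: it reduces the weak--weak boundedness of $Q$ on decreasing functions to the single pointwise estimate $Q(W^{-1/p})\lesssim W^{-1/p}$, obtained directly from the logarithmic decay in Proposition~\ref{Bstar}(iv) with any exponent $q>p$. This bypasses the $K$-functional machinery on cones entirely and makes the dependence on $B^*_\infty$ completely transparent. The interpolation approach, by contrast, is more robust---it applies to any operator bounded on $L^{q}_{\text{dec}}(w)$ for two values of $q$---but it conceals the specific structure of $Q$ that makes your direct computation so short.
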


\subsection{The $B_{p, \infty}$ class}

\ 

As was mentioned in the introduction, if $p>1$, $w\in B_{p, \infty}$ if and only if   $w\in B_p$, and in this case the following result follows:

\begin{proposition} \label{lemaaa} If $1<p<\infty$ and $w\in B_{p, \infty}$,  then $$
||\chi_E||_{(\Lambda^{p, \infty}_u(w))'}\approx \frac{u(E)}{W^{1/p}(u(E))}. 
$$

\end{proposition}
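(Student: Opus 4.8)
The goal is to compute $\|\chi_E\|_{(\Lambda^{p,\infty}_u(w))'}$ for $1<p<\infty$ and $w\in B_{p,\infty}=B_p$. By Proposition~\ref{associate}, $(\Lambda^{p,\infty}_u(w))'=\Lambda^1_u(W^{-1/p})$, so we are reduced to estimating
$$
\|\chi_E\|_{\Lambda^1_u(W^{-1/p})}=\int_0^\infty (\chi_E)^*_u(t)\,W(t)^{-1/p}\,dt
=\int_0^{u(E)} W(t)^{-1/p}\,dt,
$$
since $(\chi_E)^*_u=\chi_{[0,u(E))}$. Thus the statement is equivalent to the pointwise (in the ``variable'' $r=u(E)$) estimate
$$
\int_0^{r} W(t)^{-1/p}\,dt\approx \frac{r}{W(r)^{1/p}},\qquad r>0,
$$
which is a purely one-dimensional fact about the function $W$ and does not involve $u$ or $E$ any further.

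First I would prove the lower bound $\int_0^r W(t)^{-1/p}\,dt\gtrsim r\,W(r)^{-1/p}$; this is immediate because $W$ is increasing, so $W(t)^{-1/p}\ge W(r)^{-1/p}$ for $t\le r$, and integrating over $(0,r)$ gives the claim with constant $1$. For the reverse inequality I would use that $w\in B_p$ (equivalently $B_{p,\infty}$ for $p>1$) and the known equivalent reformulations of the $B_p$ condition. The cleanest route is to invoke the characterization that $w\in B_p$ iff $W$ is ``$p$-quasi-concave from below'' in the sense that there is $\gamma>1$ (or equivalently the factor appearing in the $B_p$ inequality) controlling how fast $W$ can grow; more concretely, $w\in B_p$ implies that the function $t\mapsto W(t)^{1/p}/t$ is, up to a constant, quasi-decreasing — indeed $r^p\int_r^\infty w(t)t^{-p}\,dt\lesssim W(r)$ is well known to be equivalent to $\big(t/s\big)^{p}\lesssim W(t)/W(s)$ failing to be possible, i.e.\ to a reverse doubling / power bound $W(s)/W(t)\lesssim (s/t)^{p-\varepsilon}$ for some $\varepsilon>0$, or more directly to the statement that $s\mapsto s^{-p}W(s)$ is essentially decreasing. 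Granting that $W(t)/t^{p}\gtrsim W(r)/r^{p}$ for $0<t\le r$ (equivalently $W(t)^{-1/p}\lesssim (r/t)\,W(r)^{-1/p}\cdot$ wrong direction) — I must be careful with the direction: what I actually need is $W(t)^{-1/p}$ small enough near $0$ to be integrable, which follows from $W(t)\gtrsim t^{p}\,W(r)/r^{p}$, giving $W(t)^{-1/p}\lesssim (r/t)\,W(r)^{-1/p}$; but $r/t$ is not integrable at $0$. So instead I use the sharper consequence of $B_p$ that there exists $\varepsilon>0$ with $W(t)\gtrsim (t/r)^{p-\varepsilon}W(r)$ for $t\le r$; then $W(t)^{-1/p}\lesssim (r/t)^{(p-\varepsilon)/p}W(r)^{-1/p}$ with exponent $(p-\varepsilon)/p<1$, which \emph{is} integrable on $(0,r)$, and $\int_0^r (r/t)^{(p-\varepsilon)/p}\,dt\approx r$, yielding $\int_0^r W(t)^{-1/p}\,dt\lesssim r\,W(r)^{-1/p}$ as desired.

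The main obstacle is therefore purely the extraction of the right self-improving consequence of the $B_p$ condition: translating ``$r^p\int_r^\infty w(t)t^{-p}\,dt\lesssim W(r)$'' into the lower power bound $W(t)\gtrsim (t/r)^{p-\varepsilon}W(r)$ with a genuinely strict gain $\varepsilon>0$. This is standard $B_p$-theory (see \cite{am:am, crs:crs}), typically proved by a Gronwall-type iteration on the inequality $W(r)\gtrsim r^p\int_r^\infty w(t)t^{-p}\,dt\ge r^p\int_r^{2r}w(t)t^{-p}\,dt\gtrsim W(2r)-W(r)$, which forces $W(2r)\le c\,W(r)$ with $c<2^p$ and then, by iteration, the claimed sub-$t^p$ growth. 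Once that lemma is in hand, the two displayed integral estimates above close the proof, and combining them gives $\|\chi_E\|_{\Lambda^1_u(W^{-1/p})}\approx u(E)/W(u(E))^{1/p}$, i.e.\ the assertion of Proposition~\ref{lemaaa}.
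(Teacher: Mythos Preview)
Your approach is correct and matches the paper's proof: both use Proposition~\ref{associate} to reduce to showing $\int_0^r W(t)^{-1/p}\,dt\approx r\,W(r)^{-1/p}$, obtain the lower bound from monotonicity of $W$, and deduce the upper bound from $w\in B_p$. The only difference is that where the paper simply cites \cite{so:so} for the upper bound $\int_0^r W(t)^{-1/p}\,dt\lesssim r\,W(r)^{-1/p}$, you supply an argument via the Ari\~no--Muckenhoupt self-improvement $B_p\Rightarrow B_{p-\varepsilon}$; this is a legitimate and standard route to the same inequality, so the proofs are essentially the same.
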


\begin{proof}  
By Proposition \ref{associate},  we obtain that 
$$
||\chi_E||_{(\Lambda^{p, \infty}_u(w))'}=\int_0^{u(E)} \frac 1{W^{1/p}(t)} dt, 
$$
but,  since  $w\in B_p$, we have that  \cite{so:so}, 
$$
\int_0^{r} \frac 1{W^{1/p}(t)} dt\lesssim \frac{r}{W^{1/p}(r)}, 
$$
 and hence,  
$$
\frac{u(E)}{W^{1/p}(u(E))}\le \int_0^{u(E)} \frac 1{W^{1/p}(t)} dt\lesssim \frac{u(E)}{W^{1/p}(u(E))}, 
$$
as we wanted to see. 
\end{proof}

\subsection{$u\in A_\infty$  and $w\in B^*_\infty$}

\ 	
 
It is known that, if $u\in A_\infty$,  then there exists $q>1$ such that
\begin{equation} \label{Ainfty2}
 \frac{u(I)}{u(E)}\lesssim \left(\frac{|I|}{|E|}\right)^{q},
\end{equation}
for every interval $I$ and  every set $E\subset I$ \cite[p. 27]{jour}.

\begin{proposition} \label{enbuencamino} 
We have that $u\in A_{\infty}$ and  $w \in B^{*}_{\infty}$ if and only if the following condition holds: 
for every    $\varepsilon>0$,  there exists $0<\eta<1$ such that 
\begin{equation}\label{ab}
W\left(u(S)\right)\leq \varepsilon W\left(u(I)\right),
\end{equation}
for every interval $I$ and every measurable set $S\subseteq I$ satisfying that  $|S|\leq \eta |I|$.

\end{proposition}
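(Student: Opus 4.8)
The statement is an equivalence, so I would prove the two implications separately, and in each direction I would work at the level of the quantitative characterizations already available: for $u\in A_\infty$ the reverse-doubling estimate \eqref{Ainfty2} (with some exponent $q>1$), and for $w\in B^*_\infty$ the characterization in Proposition \ref{Bstar}, most conveniently statement (vi), which is already phrased as an ``$\varepsilon$--$\delta$'' smallness condition on $W$, or the power-logarithmic decay in (iv).

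For the \emph{forward} direction, assume $u\in A_\infty$ and $w\in B^*_\infty$ and fix $\varepsilon>0$. Let $I$ be an interval and $S\subseteq I$ with $|S|\le\eta|I|$, $\eta$ to be chosen. By \eqref{Ainfty2} there is a fixed $q>1$ with
$$
\frac{u(S)}{u(I)}\lesssim\left(\frac{|S|}{|I|}\right)^{1/q}\le\eta^{1/q},
$$
(note I use the reverse-doubling inequality with the roles of $E$ and $I$ giving an \emph{upper} bound on $u(S)/u(I)$ by a power of $|S|/|I|$), so $u(S)\le\delta_0\,u(I)$ where $\delta_0=C\eta^{1/q}$ can be made as small as we like by shrinking $\eta$. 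Now apply Proposition \ref{Bstar}(vi): given our $\varepsilon$ there is $\delta>0$ so that $t\le\delta s$ implies $W(t)\le\varepsilon W(s)$; choosing $\eta$ small enough that $\delta_0\le\delta$ gives $W(u(S))\le\varepsilon W(u(I))$, which is \eqref{ab}.

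For the \emph{converse}, assume \eqref{ab}. First I would extract $w\in B^*_\infty$: here the point is that, since $u(x)>0$ a.e.\ and $u\notin L^1(\mathbb R)$ (our standing assumption), the set function $E\mapsto u(E)$ restricted to subsets of a large interval $I$ takes all values in $[0,u(I)]$, and $u(I)\to\infty$ as $I$ grows; so for given $0<t\le s$ one can produce an interval $I$ and a subset $S\subseteq I$ with $u(I)\approx s$, $u(S)\approx t$ and $|S|/|I|$ controlled — and then \eqref{ab} translates into the smallness of $W(t)/W(s)$, giving condition (vi) of Proposition \ref{Bstar}. The more delicate half is $u\in A_\infty$: I would argue by contradiction, or rather contrapositive, assuming $u\notin A_\infty$ and using the characterization \eqref{Ainfty}; its failure means that for every $\delta\in(0,1)$ and every $C$ there exist $I$ and $E\subseteq I$ with $|E|/|I|$ small but $u(E)/u(I)$ not correspondingly small. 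Feeding such a pair into \eqref{ab} (with $S=E$, so $|S|/|I|\le\eta$) forces $W(u(E))\le\varepsilon W(u(I))$; since $W$ is increasing, $p$ quasi-concave, and $w\in B^*_\infty$ (already established), one can invert this to get a genuine upper bound $u(E)/u(I)\le F(\varepsilon)$ with $F(\varepsilon)\to 0$, which is exactly an $A_\infty$-type estimate for $u$ — contradiction.

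The main obstacle I anticipate is precisely this last step: turning the smallness of $W(u(S))/W(u(I))$ back into smallness of $u(S)/u(I)$. This requires a quantitative lower bound on $W$ relative to itself — i.e.\ a reverse-doubling or ``submultiplicative from below'' property of $W$ — which must be harvested from $w\in B^*_\infty$ together with $p$ quasi-concavity. Using (iv) of Proposition \ref{Bstar} in the form $W(t)/W(s)\lesssim(1+\log(s/t))^{-p}$ is a power decay in $\log(s/t)$, which is \emph{slower} than any positive power of $t/s$, so one must be careful: the implication ``$W(u(S))$ small $\Rightarrow u(S)$ small'' will only give logarithmic-type control, and one must check this is still enough to contradict the failure of \eqref{Ainfty}, where the gap between $|E|/|I|$ and $u(E)/u(I)$ can be taken arbitrarily bad. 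I expect the clean way around this is to use quasi-concavity of $W$ (which gives $W(t)/W(s)\gtrsim t/s$ for $t\le s$, hence $u(S)/u(I)\lesssim W(u(S))/W(u(I))\le\varepsilon$) to get a \emph{linear} lower bound for free, making the inversion immediate and the contradiction with $A_\infty$ transparent.
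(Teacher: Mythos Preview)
Your forward direction is essentially the paper's argument, with one slip: you cite \eqref{Ainfty2}, but that inequality reads $u(I)/u(E)\lesssim(|I|/|E|)^q$ and therefore gives a \emph{lower} bound on $u(S)/u(I)$, not an upper one. What you need is the defining $A_\infty$ estimate \eqref{Ainfty}, which supplies $u(S)/u(I)\le C_u(|S|/|I|)^\delta$ for some $\delta\in(0,1)$; with this correction the argument is identical to the paper's.

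The converse is where your plan diverges from the paper and runs into a genuine gap. You propose to establish $w\in B^*_\infty$ first, by realizing arbitrary $0<t\le s$ as $t=u(S)$, $s=u(I)$ with ``$|S|/|I|$ controlled''. But controlled in terms of what? To apply \eqref{ab} you need $|S|\le\eta|I|$ for the particular $\eta$ dictated by $\varepsilon$, and passing from smallness of $u(S)/u(I)$ to smallness of $|S|/|I|$ is exactly \eqref{Ainfty2}, which presupposes $u\in A_\infty$ --- the very thing you have not yet proved. (There is in fact a non-circular way to do your order: choose $S$ to be the $u$-heaviest subset of $I$ with $|S|=\eta|I|$, which always satisfies $u(S)\ge\eta\,u(I)$ by rearrangement; but you do not indicate this.) The paper avoids the issue by reversing the order: it first proves $u\in A_\infty$, and only then uses \eqref{Ainfty2} to pull $|S|/|I|$ down from $u(S)/u(I)$ and conclude $\overline W(\delta)<1$.

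The second gap is in your inversion step for $u\in A_\infty$: you invoke $p$ quasi-concavity of $W$ to get $W(t)/W(s)\gtrsim t/s$, but quasi-concavity is \emph{not} among the standing hypotheses of this proposition (those are only $w\in\Delta_2$, $u\notin L^1$, $w\notin L^1$), and $w\in B^*_\infty$ gives upper bounds on $W(t)/W(s)$, not lower ones, so it does not help here either. The correct tool --- and the one the paper uses --- is $\Delta_2$ alone: iterating $W(2r)\le cW(r)$ gives $W(u(I))\le c^k W(u(S))$ whenever $u(I)\le 2^k u(S)$, and combining this with $W(u(S))\le\varepsilon' W(u(I))$ for $\varepsilon'<c^{-k}$ forces $u(I)/u(S)>2^{k-1}$. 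This yields the $\varepsilon$--$\delta$ characterization of $A_\infty$ directly, without any appeal to $B^*_\infty$ or quasi-concavity.
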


\begin{proof}
 Let us first  assume that $w \in B^{*}_{\infty}$ and $u\in A_\infty$. Then, by Proposition~\ref{Bstar} we have that, for every $\varepsilon>0$,  there exists $\delta>0$ such that $W(t)\leq \varepsilon W(s)$,  whenever $t\leq \delta s$. 

On the other hand, if  $S\subset I$ is such that $|S|<\eta|I|$,  for some $\eta>0$, 
$$
\frac{u(S\big)}{u( I\big)}\le 
    C_u\ \left(\frac{|S|}{|I|}\right)^{r}
    <C_u\eta^r,
$$
where $r\in (0,1)$ and $C_u>0$ are constants depending on the  $A_{\infty}$ condition. So, choosing $\eta \in (0,1)$
such that $C_u\eta ^r< \delta$ we obtain the result. 

Conversely, let us see first that $u\in A_\infty$. Let $\varepsilon={1}/{2^{k-1}}$, with $k\in \N$ and let $\varepsilon'<{1}/{c^k}$, where $c>1$ is the constant in the $\Delta_2$ condition of $w$.  Let $\delta=\delta(\varepsilon')$ be such that, by hypothesis, $|S|\leq \delta |I|$ implies,
$$
W(u(  S))\leq \varepsilon' W(u(I)) < \frac{1}{c^k} W(u(I)).
$$
If $\displaystyle{\frac{u(I)}{u(S)}\leq 2^{k-1}}$ we get
$$
W(u(S))<\frac{1}{c^k} W\left(\frac{u(I)}{u(S)}u(S)\right)\leq \frac{1}{c}W(u(S)),
$$
 which is a contradiction. Hence, necessarily ${u(S)}\leq \frac{1}{2^{k-1}}u(I)=\varepsilon u(I)$. Thus,  we have proved that, 
 $$
 \forall \varepsilon>0, \ \exists \delta>0;  \ |S|\leq \delta |I| \implies  u(S)\leq \varepsilon u(I),
 $$
 and   this  implies that $u\in A_\infty$   \cite{gr:gr}. 
 
Let us now prove  that $w\in B_\infty^*$. By (\ref{ab}), we have that there exists $\lambda<1$ such that $W(u(E))/W(u(I)) <1/2$, provided $E\subset I$ and $|E|\le\lambda |I|$. 

Now, since $u\in A_\infty$ we have by (\ref{Ainfty2}),  that there exists $q>1$  and $C_u>0$ such that, for every $S\subset I$, 
\begin{equation}\label{ainf}
\frac{|S|}{|I|}\le C_u\bigg(\frac{u(S)}{u(I)}\bigg)^{1/q}, 
\end{equation}
and hence  if we take $\delta$ such that $C_u \delta^{1/q}\le \lambda$, and   $S\subset I$ such that ${u(S)}/{u(I)}\le \delta$, we obtain $W(u(S))/W(u(I)) <1/2$. 

Then,  if $0<t\le \delta s$ and we take an interval $I$ such that $u(I)=s$ and $S\subset I$ satisfies 
$u(S)=t$, we obtain $W(t)/W(s)<1/2$,  and consequently $\overline W(\delta)<1$. The result now follows from  
Proposition \ref{Bstar}. 
\end{proof}

\section{Main Results}

It is known  (see~\cite[pg. 256]{g:g}) that if $f\in \mathcal{C}^{\infty}_c$, then
\begin{equation} \label{cotlar}
(Hf)^2=f^2 + 2 H(fHf),
\end{equation}
and, using this equality, it was proved that, if $p>1$, 
$$
H:L^p\rightarrow L^p \implies H:L^{2p}\rightarrow L^{2p}. 
$$

Using the same sort of  ideas we obtain the following result: 

\begin{theorem} \label{ainfinito1}
 If \eqref{Hilbertantecedents} holds,  for some $0<p<\infty$  then, for every $r>p$, 
$$
H:\Lambda^r_u(w) \longrightarrow  \Lambda^{r}_u(w)
$$
is bounded. 
\end{theorem}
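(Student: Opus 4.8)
The plan is to bootstrap from the weak-type hypothesis \eqref{Hilbertantecedents} at exponent $p$ to a \emph{strong}-type bound at every larger exponent $r$, using the Cotlar-type identity \eqref{cotlar} together with the interpolation-flavoured tools already assembled in the excerpt. The first observation is that $r>p$ gives us a genuine gain of integrability: by Proposition~\ref{NC} we may assume $w\in\Delta_2$, $u\notin L^1(\R)$, $w\notin L^1(\R^+)$, and $W\circ u$ is doubling, so the spaces are quasi-normed and $\mathcal{C}^\infty_c(\R)$ is dense in $\Lambda^r_u(w)$; hence it suffices to prove the inequality $\|Hf\|_{\Lambda^r_u(w)}\lesssim\|f\|_{\Lambda^r_u(w)}$ for $f\in\mathcal{C}^\infty_c$, where \eqref{cotlar} is valid pointwise.

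The core of the argument should be a self-improvement step: I would first upgrade the weak-type bound at $p$ to a weak-type bound at $2p$ (and, more generally, at $2^k p$), and then fill in the intermediate exponents. Concretely, from \eqref{cotlar} we have $(Hf)^2\le f^2+2|H(fHf)|$, so $((Hf)^2)^*_u(t)\lesssim (f^2)^*_u(t/2)+2(H(fHf))^*_u(t/2)$; raising to the power $p$, multiplying by $W(t)$ and taking the supremum gives
\[
\|Hf\|_{\Lambda^{2p,\infty}_u(w)}^{2p}\lesssim \|f\|_{\Lambda^{2p,\infty}_u(w)}^{2p}+\|H(fHf)\|_{\Lambda^{p,\infty}_u(w)}^{p}\lesssim \|f\|_{\Lambda^{2p,\infty}_u(w)}^{2p}+\|fHf\|_{\Lambda^{p}_u(w)}^{p},
\]
where the last step uses \eqref{Hilbertantecedents}. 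Now by the definition of the rearrangement and Cauchy--Schwarz at the level of distribution functions (i.e.\ $(fg)^*_u(t)\le f^*_u(t/2)g^*_u(t/2)$) one bounds $\|fHf\|_{\Lambda^p_u(w)}\lesssim\|f\|_{\Lambda^{2p}_u(w)}\|Hf\|_{\Lambda^{2p}_u(w)}$; to close the loop one must absorb $\|Hf\|_{\Lambda^{2p}_u(w)}$, so the cleanest route is to run the whole computation with the \emph{weak} norm on the left and a suitable Kolmogorov/good-$\lambda$ passage, or — following the classical $L^{2p}$ proof verbatim — first establish a restricted weak-type estimate on characteristic functions and then invoke the interpolation results on cones (Proposition~\ref{negBinfty}, Corollary~\ref{Qweakweak Bstar}, and the Calder\'on--Mitjagin-type machinery of \cite{cm:cm}) to promote restricted weak-type at $2^kp$ to strong-type on the open range.

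After the weak-type estimate has been propagated to all exponents $2^k p$, the passage to strong-type boundedness at an arbitrary $r>p$ is an interpolation argument: pick $k$ with $2^k p>r$, so that $r$ lies strictly between $p$ and $2^k p$, and interpolate the weak-type endpoints using the fact that $w\in B_{p,\infty}$ (Proposition~\ref{NC}(d)) controls the behaviour of $W$, together with the interpolation theorem for the cone of decreasing rearrangements from \cite{cm:cm}; the off-diagonal weak endpoints combine to a strong bound in the interior exactly as in the unweighted Marcinkiewicz theorem, the role of the quasi-concavity of $W$ being to guarantee the requisite convexity of $\|\cdot\|_{\Lambda^s_u(w)}$ along the scale. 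The main obstacle, as in the classical proof, is the \emph{absorption} step: one cannot naively subtract $\|Hf\|_{\Lambda^{2p}}$ from both sides because a priori it may be infinite, so the argument must be organized either through a truncation/limiting scheme (first proving a priori finiteness for $f\in\mathcal{C}^\infty_c$ via the known $L^2$ theory of $H$ and the local integrability of $u$, then removing the truncation) or through the restricted weak-type formulation where only characteristic functions appear and finiteness is automatic; I expect the bulk of the technical work — and the place where the hypotheses $w\in\Delta_2$ and $W\circ u$ doubling are genuinely used — to sit precisely in making that absorption rigorous.
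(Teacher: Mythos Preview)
Your overall plan --- Cotlar's identity \eqref{cotlar}, a self-improving step from exponent $p$ to $2p$, then interpolation and iteration --- is exactly the paper's strategy. The gap is in the H\"older step and the absorption that follows it. You bound
\[
\|fHf\|_{\Lambda^p_u(w)}\lesssim\|f\|_{\Lambda^{2p}_u(w)}\,\|Hf\|_{\Lambda^{2p}_u(w)},
\]
with \emph{strong} norms on both factors; plugging this in gives
\(
\|Hf\|_{\Lambda^{2p,\infty}_u(w)}^{2}\lesssim\|f\|_{\Lambda^{2p,\infty}_u(w)}^{2}+\|f\|_{\Lambda^{2p}_u(w)}\|Hf\|_{\Lambda^{2p}_u(w)},
\)
and now the $\|Hf\|_{\Lambda^{2p}_u(w)}$ on the right is \emph{larger} than the weak norm on the left, so nothing can be absorbed, regardless of a priori finiteness. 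Your proposed escape routes do not rescue this: the restricted weak-type idea fails because \eqref{cotlar} is only available for smooth $f$, not for characteristic functions; and truncation or a priori finiteness for $f\in\mathcal C^\infty_c$ still leaves you with an inequality of the shape $A^2\le C + CB$ with $A\le B$, which yields no bound on $A$.

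The paper closes the loop by distributing the weight differently in the H\"older step: writing
\[
\int_0^\infty (f^*_u)^p\big((Hf)^*_u\big)^p w
=\int_0^\infty \frac{(f^*_u)^p}{W^{1/2}}\,\big(W^{1/(2p)}(Hf)^*_u\big)^p\,w
\le \|Hf\|_{\Lambda^{2p,\infty}_u(w)}^{\,p}\,\|f\|_{\Lambda^{2p,p}_u(w)}^{\,p},
\]
so that the \emph{weak} $2p$-norm of $Hf$ appears on the right. One then obtains a genuine quadratic inequality $X^2\le C+C_pX$ in $X=\|Hf\|_{\Lambda^{2p,\infty}_u(w)}/\|f\|_{\Lambda^{2p,p}_u(w)}$, hence $H:\Lambda^{2p,p}_u(w)\to\Lambda^{2p,\infty}_u(w)$ is bounded. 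Interpolating this off-diagonal endpoint with the hypothesis $H:\Lambda^{p}_u(w)\to\Lambda^{p,\infty}_u(w)$ (via \cite[Theorem~2.6.5]{crs:crs}) gives the strong bound $H:\Lambda^{r}_u(w)\to\Lambda^{r}_u(w)$ for every $p<r<2p$, and iteration finishes. The missing idea in your write-up is precisely this asymmetric splitting that puts the weak norm on the $Hf$ factor; once you insert it, the rest of your outline goes through and the vaguer detours (Kolmogorov, restricted weak-type, Cerd\`a--Mart\'{\i}n) become unnecessary.
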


\begin{proof}
By (\ref{cotlar}), we have that 
\begin{align*}
||Hf||_{\Lambda^{2p,\infty}_u(w)} &= ||(Hf)^2||_{\Lambda^{p,\infty}_u(w)}^{1/2} = ||f^2 + 2 H(fHf)||_{\Lambda^{p,\infty}_u(w)}^{1/2}\\
       &\leq   C(||f^2||_{\Lambda^{p,\infty}_u(w)}+||H(fHf)||_{\Lambda^{p,\infty}_u(w)})^{1/2} \\
       &\leq   (C||f||_{{\Lambda^{2p,\infty}_u(w)}}^{2} +C_p ||fHf||_{\Lambda^{p}_u(w)})^{1/2}.
\end{align*}

Now, we have that 
$$
(fHf)^*_u (t) \leq f^*_u(t/2)  (Hf)^*_u(t/2),
$$
and hence, since $w\in \Delta_2$,  we obtain that
\begin{align*}
||fHf||_{\Lambda^{p}_u(w)}  &   \lesssim  \left(\int_0^{\infty} (f^*_u(t))^p ((Hf)^*_u(t))^p w(t) dt   \right)^{1/p} \\
  & = \left(\int_0^{\infty} \frac{(f^*_u(t))^p}{W^{1/2}(t)} ( W^{1/2p}(t) (Hf)^*_u(t))^p w(t)dt\right)^{1/p}\\
  & \leq ||Hf||_{\Lambda^{2p,\infty}_u(w)} ||f||_{\Lambda^{2p,p}_u(w)}, 
\end{align*}
where the $\Lambda^{q,p}_u(w)$ spaces are defined \cite{crs:crs} by the condition 
$$
||f||_{\Lambda^{q,p}_u(w)}=\bigg(\int_0^\infty f^*(t)^p W^{\frac pq-1}(t) w(t) dt \bigg)^{1/p}<\infty.
$$

Therefore, we have that
$$
||Hf||_{\Lambda^{2p,\infty}_u(w)} ^2\leq  C||f||_{{\Lambda^{2p,\infty}_u(w)}}^{2} +
C_p ||f||_{\Lambda_u^{2p,p}(w)} ||Hf||_{\Lambda_u^{2p,\infty}(w)}
$$
and, consequently,
$$
\frac{||Hf||^2_{\Lambda^{2p,\infty}_u(w)}}{||f||_{{\Lambda^{2p,p}_u(w)}}^{2}}
\leq
        C\frac{||f||_{{\Lambda^{2p,\infty}_u(w)}}^{2}}{||f||_{{\Lambda^{2p,p}_u(w)}}^{2}}
+C_p\frac{||Hf||_{\Lambda^{2p,\infty}_u(w)}}{||f||_{\Lambda^{2p,p}_u(w)}}.
$$
Using that $\Lambda^{2p,p}_u(w) \hookrightarrow  \Lambda^{2p,\infty}_u(w)$, we obtain that
$$
\bigg( \frac{||Hf||_{\Lambda^{2p,\infty}_u(w)}}{||f||_{{\Lambda^{2p,p}_u(w)}}} \bigg)^2
             \leq   C
           +C_p\frac{||Hf||_{\Lambda^{2p,\infty}_u(w)}}{||f||_{\Lambda^{2p,p}_u(w)}}, 
$$
from which it follows that
$$
||Hf||_{\Lambda^{2p,\infty}_u(w)} \lesssim  ||f||_{\Lambda^{2p,p}_u(w)}.
$$
and hence
$$
H: \Lambda^{2p,p}_u(w)\longrightarrow \Lambda^{2p,\infty}_u(w)
$$
is bounded. 
Finally,   by interpolation  (see \cite[Theorem 2.6.5]{crs:crs}), we  obtain that, for every $p<~r<2p$, 
$$
H:\Lambda^r_u(w)\to \Lambda^r_u(w)
$$
is bounded. The result now follows by iteration. 
\end{proof}

\begin{lemma} \label{lema1} Let $0<p<\infty$ be fixed. If {\rm (\ref{Hilbertantecedents})} holds,  then
$$
||H(uf)u^{-1}||_{(\Lambda^{p}_u(w))'}\lesssim ||f||_{(\Lambda^{p, \infty}_u(w))'}. 
$$
\end{lemma}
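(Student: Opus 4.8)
The plan is to use the duality between $(\Lambda^p_u(w))'$ and $(\Lambda^{p,\infty}_u(w))'$ together with the self-adjointness of the Hilbert transform (up to a sign) with respect to Lebesgue measure. Concretely, for a measurable function $g$ and an interval-rich test function, I would compute
$$
\int_{\mathbb R} \big(H(uf)(x)\,u^{-1}(x)\big)\, g(x)\, u(x)\, dx = \int_{\mathbb R} H(uf)(x)\, g(x)\, dx = -\int_{\mathbb R} (uf)(x)\, Hg(x)\, dx = -\int_{\mathbb R} f(x)\, Hg(x)\, u(x)\, dx,
$$
using that $H^* = -H$ on $L^2$ and a density/approximation argument (justified by Proposition~\ref{properties}(b), since $w\in\Delta_2$, $u\notin L^1$, $w\notin L^1$). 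The absolute value of the last integral is, by the definition of the associate norm, at most $\|f\|_{(\Lambda^{p,\infty}_u(w))'}\,\|Hg\|_{\Lambda^{p,\infty}_u(w)}$.

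Next I would invoke the hypothesis \eqref{Hilbertantecedents}, namely that $H:\Lambda^p_u(w)\to\Lambda^{p,\infty}_u(w)$ is bounded, to bound $\|Hg\|_{\Lambda^{p,\infty}_u(w)}\lesssim \|g\|_{\Lambda^p_u(w)}$. Combining the two estimates gives
$$
\left|\int_{\mathbb R} H(uf)(x)\,u^{-1}(x)\, g(x)\, u(x)\, dx\right| \lesssim \|f\|_{(\Lambda^{p,\infty}_u(w))'}\,\|g\|_{\Lambda^p_u(w)}.
$$
Taking the supremum over all $g$ with $\|g\|_{\Lambda^p_u(w)}\le 1$ and recalling the definition of the associate norm $\|\cdot\|_{(\Lambda^p_u(w))'}$ yields exactly $\|H(uf)u^{-1}\|_{(\Lambda^p_u(w))'}\lesssim \|f\|_{(\Lambda^{p,\infty}_u(w))'}$, as desired.

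\textbf{Main obstacle.} The delicate point is the rigorous justification of the integration-by-parts identity $\int H(uf)\,g\,dx = -\int uf\, Hg\,dx$ in this weighted, non-$L^2$ setting: the functions $uf$ and $g$ need not lie in $L^2(\mathbb R)$, so the adjoint relation $H^*=-H$ is not immediately applicable. I expect to handle this by first establishing the inequality for $f\in\mathcal C^\infty_c(\mathbb R)$ and $g\in\mathcal C^\infty_c(\mathbb R)$ (where the identity is classical and all integrals converge absolutely), and then passing to the general case by density of $\mathcal C^\infty_c(\mathbb R)$ in $\Lambda^p_u(w)$ (for $g$) and by a monotone/Fatou argument together with the same density in the associate space (for $f$). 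One must also check that $H$ is well defined on the relevant functions; but this is guaranteed under our standing assumptions, since by Theorem~\ref{ainfinito1} the boundedness \eqref{Hilbertantecedents} forces $H$ to act boundedly on $\Lambda^r_u(w)$ for $r>p$, and the approximation arguments can be carried out there. Apart from this technical care with domains, the argument is a routine duality computation.
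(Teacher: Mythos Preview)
Your proposal is correct and follows exactly the same route as the paper's proof: the paper simply states that the result follows from the definition of the associate spaces together with the antisymmetry identity $\int_{\mathbb R}(Hf)g\,dx=-\int_{\mathbb R}(Hg)f\,dx$, which is precisely the duality computation you wrote out in detail. If anything, you are more careful than the paper, which does not discuss the domain issues you flag as your ``main obstacle''.
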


\begin{proof} The result follows easily  from  the definition of the associate spaces and the fact that 
$$\int_{\mathbb R} (Hf)(x) g(x) dx=-\int_{\mathbb R} (Hg)(x) f(x) dx.
$$
\end{proof}

\begin{lemma} \label{lema2}  If  $p>1$ and {\rm (\ref{Hilbertantecedents})} holds then, for every measurable set $E$, 
$$
\sup_F \frac{ \int_F  |H(u\chi_E)(x)| dx}{W^{1/p}(u(F))}\lesssim \frac{ u(E)}{W^{1/p}(u(E))},
$$
where the supremum is taken over all measurable sets $F$.
\end{lemma}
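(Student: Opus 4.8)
The plan is to recognize the inner integral $\int_F|H(u\chi_E)|\,dx$ as a duality pairing and to feed it into Lemma~\ref{lema1}, together with the identification of the associate norm of $\chi_E$ provided by Proposition~\ref{lemaaa}. We may assume $0<u(E)<\infty$ and $0<u(F)<\infty$, the remaining cases being trivial or reducible by exhausting $F$ with sets of finite $u$-measure.

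First I would set $\phi=\operatorname{sgn}\bigl(H(u\chi_E)\bigr)\chi_F$, so that $|\phi|\le\chi_F$ and
$$
\int_F|H(u\chi_E)(x)|\,dx=\int_{\mathbb R}\phi(x)\,\bigl(H(u\chi_E)(x)\,u^{-1}(x)\bigr)\,u(x)\,dx.
$$
By the definition of the associate space $(\Lambda^p_u(w))'$ applied to $g=H(u\chi_E)u^{-1}$, this last integral is at most $\|H(u\chi_E)u^{-1}\|_{(\Lambda^p_u(w))'}\,\|\phi\|_{\Lambda^p_u(w)}$. The first factor is controlled by Lemma~\ref{lema1} with $f=\chi_E$, which gives $\|H(u\chi_E)u^{-1}\|_{(\Lambda^p_u(w))'}\lesssim\|\chi_E\|_{(\Lambda^{p,\infty}_u(w))'}$; and then, since $p>1$ and $w\in B_{p,\infty}$ by Proposition~\ref{NC}(d), Proposition~\ref{lemaaa} yields $\|\chi_E\|_{(\Lambda^{p,\infty}_u(w))'}\approx u(E)/W^{1/p}(u(E))$. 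For the second factor, the monotonicity of the decreasing rearrangement and $|\phi|\le\chi_F$ give $\phi^*_u\le(\chi_F)^*_u=\chi_{[0,u(F))}$, whence $\|\phi\|_{\Lambda^p_u(w)}\le W^{1/p}(u(F))$.

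Assembling these estimates produces $\int_F|H(u\chi_E)(x)|\,dx\lesssim \frac{u(E)}{W^{1/p}(u(E))}\,W^{1/p}(u(F))$; dividing by $W^{1/p}(u(F))$ and taking the supremum over all $F$ then gives the claim. I do not expect a serious obstacle here: the argument is essentially a repackaging of Lemma~\ref{lema1} (itself based on the self-adjointness identity $\int(Hf)g=-\int f(Hg)$) together with the elementary computation $\|\chi_F\|_{\Lambda^p_u(w)}=W^{1/p}(u(F))$. The points deserving a little care are checking that $\phi\in\Lambda^p_u(w)$ so that the associate-norm inequality is legitimate, confirming $w\in B_{p,\infty}$ so that Proposition~\ref{lemaaa} applies (this is precisely Proposition~\ref{NC}(d), and it is here and in Proposition~\ref{lemaaa} that the hypothesis $p>1$ is used), and disposing of the degenerate values of $u(E)$ and $u(F)$.
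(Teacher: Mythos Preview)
Your proposal is correct and follows essentially the same route as the paper: write $\int_F|H(u\chi_E)|\,dx$ as the duality pairing $\int_{\mathbb R}\chi_F\,|H(u\chi_E)u^{-1}|\,u\,dx$, bound it by $\|H(u\chi_E)u^{-1}\|_{(\Lambda^p_u(w))'}\,\|\chi_F\|_{\Lambda^p_u(w)}$, apply Lemma~\ref{lema1}, and finish with Proposition~\ref{lemaaa}. The only difference is cosmetic---you make explicit the sign function $\phi$, the invocation of Proposition~\ref{NC}(d) to ensure $w\in B_{p,\infty}$, and the degenerate cases---while the paper leaves these implicit.
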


\begin{proof} Using duality and Lemma \ref{lema1}, we can prove  that (recall that $u(x)>0$, a.e.~$x\in~\mathbb R$):
\begin{eqnarray*}
 \int_F  |H(u\chi_E)(x)| dx&=&  \int_F  |H(u\chi_E)(x)u^{-1}(x)| u(x) dx
\\
&\le&  ||H(u\chi_E)u^{-1}||_{(\Lambda^{p}_u(w))'} ||\chi_F||_{\Lambda^{p}_u(w)}
\\
&\lesssim& 
||\chi_E||_{(\Lambda^{p, \infty}_u(w))'} ||\chi_F||_{\Lambda^{p}_u(w)},
\end{eqnarray*}
and the result follows by Proposition \ref{lemaaa}.

\end{proof}

As an immediate consequence, we obtain the following: 

\begin{corollary} If \eqref{Hilbertantecedents} holds for some $0<p<\infty$, then
\begin{equation}\label{cp}
\sup_{I} \frac 1{u(I)} \int_I |H(u\chi_I)(x)| dx <\infty, 
\end{equation}
where the supremum is taken over all intervals $I$.  
\end{corollary}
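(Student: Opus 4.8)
The claim is the immediate specialization of Lemma~\ref{lema2} (for $p>1$) to the case $E=F=I$ an interval, together with a separate argument to handle $0<p\le 1$. For $p>1$ there is essentially nothing to do: taking $E=I$ and restricting the supremum in Lemma~\ref{lema2} to $F=I$ gives
$$
\frac{1}{u(I)}\int_I |H(u\chi_I)(x)|\,dx
= \frac{W^{1/p}(u(I))}{u(I)}\cdot\frac{\int_I |H(u\chi_I)(x)|\,dx}{W^{1/p}(u(I))}
\lesssim \frac{W^{1/p}(u(I))}{u(I)}\cdot\frac{u(I)}{W^{1/p}(u(I))}=C,
$$
with a constant independent of $I$. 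So the first step is simply to write this chain of inequalities.

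For $0<p\le 1$ one cannot invoke Lemma~\ref{lema2} directly (its proof used Proposition~\ref{lemaaa}, which requires $p>1$), so the plan is to deduce the case $p\le 1$ from the case $p>1$ by an extrapolation in $p$. The mechanism is Theorem~\ref{ainfinito1}: if \eqref{Hilbertantecedents} holds for some $0<p<\infty$, then $H:\Lambda^r_u(w)\to\Lambda^r_u(w)$ is bounded for every $r>p$, hence in particular the \emph{weak-type} bound $H:\Lambda^r_u(w)\to\Lambda^{r,\infty}_u(w)$ holds for some exponent $r>1$. Applying the already-established case of the present corollary with this exponent $r>1$ yields exactly $\sup_I \frac{1}{u(I)}\int_I |H(u\chi_I)|\,dx<\infty$, and the displayed quantity \eqref{cp} does not depend on $p$ at all. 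Thus the conclusion for the original $p\le 1$ follows.

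The only point requiring a line of care is the very first reduction: Lemma~\ref{lema2} is stated with a supremum over \emph{all measurable sets} $F$, and we are specializing to $F=I$, which is legitimate since the inequality holds for each $F$ individually; and we are choosing $E=I$, an interval, which is an admissible choice of measurable set. I do not anticipate a genuine obstacle here — the corollary is deliberately an ``immediate consequence'' — the only thing to get right is bookkeeping of the exponent when $p\le 1$, namely that Theorem~\ref{ainfinito1} does produce an admissible $r>1$ for which the $p>1$ argument applies, and that \eqref{cp} is exponent-free so transferring the bound back is automatic.
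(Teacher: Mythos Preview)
Your proposal is correct and follows essentially the same approach as the paper: reduce to the case $p>1$ via Theorem~\ref{ainfinito1} (the paper simply says ``we can assume that $p>1$''), and then apply Lemma~\ref{lema2} with $E=F=I$. Your observation that \eqref{cp} is exponent-free is exactly the reason the reduction is legitimate.
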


\begin{proof} By Theorem \ref{ainfinito1},  we can assume that $p>1$ and therefore Lemma \ref{lema2} holds. Taking $F=E=I$ in this lemma, we obtain the result. 
 \end{proof}

\begin{theorem} \label{ainfinito}
If $H$ satisfies \text{\rm (\ref{Hilbertantecedents})} for some $0<p<\infty$, then $u\in A_\infty$. 
\end{theorem}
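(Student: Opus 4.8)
The plan is to deduce the $A_\infty$ condition for $u$ from the estimate \eqref{cp}, which by the previous corollary is available once we know \eqref{Hilbertantecedents} holds (the reduction to $p>1$ via Theorem~\ref{ainfinito1} is already in hand). So the task reduces to the purely real-variable statement: if $\sup_I \frac1{u(I)}\int_I |H(u\chi_I)|\,dx<\infty$, then $u\in A_\infty$. This is a classical-flavored fact, and I would prove it by establishing a good-$\lambda$ type or, more cleanly, a reverse H\"older / $A_\infty$ bound directly: namely, fix an interval $I$ and a measurable $E\subset I$, and show that if $|E|/|I|$ is small then $u(E)/u(I)$ is small, which is one of the standard equivalent formulations of $u\in A_\infty$ (see \cite{gr:gr}).

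The key mechanism is the elementary pointwise lower bound for the Hilbert transform of a positive function supported on an interval: if $E\subset I$ and $x\in I\setminus E$ lies, say, in the ``half'' of $I$ farther from the bulk of $E$, then for $y\in E$ the quantity $1/(x-y)$ has a controlled sign and size, so that $|H(u\chi_E)(x)|\gtrsim \frac{1}{|I|}\int_E u(y)\,dy = \frac{u(E)}{|I|}$ on a subset of $I$ of measure comparable to $|I|$. More precisely I would split $I$ into left and right halves, note that $\int_E u$ is concentrated (up to a factor $2$) in one of them, say the left half $I^-$, and then on a fixed proportion of the right half $I^+$ one gets $H(u\chi_E)(x) \gtrsim u(E)/|I|$ with a definite sign. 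Integrating this over that portion of $I^+$ and invoking \eqref{cp} (with $E$ replaced by $I$ inside, but using that $u\chi_E \le u\chi_I$ only helps in one direction — so one actually runs the argument with the interval $I$ itself and an auxiliary sub-interval) yields $u(E) \lesssim \int_{I^+} |H(u\chi_E)| \lesssim \int_I |H(u\chi_I)| \lesssim u(I)$; iterating/localizing this on dyadic scales converts the bound into the quantitative decay $u(E)/u(I) \lesssim (|E|/|I|)^\delta$, or at least into the qualitative implication ``$|E|$ small $\Rightarrow$ $u(E)$ small'' that characterizes $A_\infty$.

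Concretely the steps are: (1) reduce to $p>1$ and record \eqref{cp}; (2) prove the pointwise lower estimate $H(u\chi_E)(x)\gtrsim u(E)/|I|$ on a set of proportional measure in $I$, for $E\subset I$; (3) combine with \eqref{cp} applied to dyadic subintervals to obtain, for a suitable subinterval $J\subset I$ containing a fixed fraction of $E$, a bound of the form $u(E\cap J)\lesssim u(J)$ with a constant strictly less than what a doubling/splitting argument needs; (4) iterate the splitting to get geometric decay and conclude $u\in A_\infty$ by the set-theoretic characterization \eqref{Ainfty}, \eqref{Ainfty2}. I expect step~(3)–(4) — turning a single-scale inequality into the $A_\infty$ decay — to be the main obstacle: one must choose the chain of nested intervals so that at each stage a definite fraction of $u(E)$ is captured while the comparison constant from \eqref{cp} does not accumulate, and some care is needed because $H(u\chi_I)$ rather than $H(u\chi_E)$ is what \eqref{cp} controls, so the sign/localization argument in step~(2) has to be arranged to compare the two. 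Once that bookkeeping is done, the rest is a standard iteration.
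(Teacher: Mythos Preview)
Your reduction to \eqref{cp} via Theorem~\ref{ainfinito1} matches the paper exactly, but from there the two arguments diverge, and yours has a genuine gap.

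The crucial unjustified step is the inequality $\int_{I^+}|H(u\chi_E)|\lesssim\int_I|H(u\chi_I)|$. The Hilbert transform is not positivity-preserving, so $0\le u\chi_E\le u\chi_I$ gives no comparison between $|H(u\chi_E)|$ and $|H(u\chi_I)|$; in fact the two can be completely unrelated on $I$. Without that comparison, your lower bound $|H(u\chi_E)(x)|\gtrsim u(E)/|I|$ on a proportion of $I$ never connects to \eqref{cp}, and the chain collapses to the trivial statement $u(E)\le u(I)$. You flag this yourself as the ``main obstacle,'' but the proposed fix (arranging signs so that $H(u\chi_E)$ and $H(u\chi_I)$ can be compared) does not work: the piece $H(u\chi_{I\setminus E})$ can have either sign and arbitrary size at any given point of $I$, so no localization of $E$ inside $I$ will force the comparison. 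The subsequent iteration in steps (3)--(4) therefore has nothing to iterate.

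The paper takes a completely different route from \eqref{cp} to $A_\infty$: it invokes the classical conjugate-function theory (Zygmund, recorded in \cite{bs:bs}) to the effect that for nonnegative $f$ supported on an interval $I$,
\[
\frac{1}{|I|}\int_I Mf\;\lesssim\;\frac{1}{|I|}\int_I f+\frac{1}{|I|}\int_I|Hf|,
\]
obtained by rescaling the inequality $\|Mf\|_{L^1(0,1)}\lesssim\|f\|_{L^1(0,1)}+\|Cf\|_{L^1(0,1)}$ (via $Nf\approx Mf$ for $f\ge0$). Applying this with $f=u\chi_I$ and using \eqref{cp} gives $\int_I M(u\chi_I)\lesssim u(I)$ for every interval $I$, which is exactly the Fujii--Wilson characterization of $A_\infty$ \cite{w:w,hp:hp}. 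So the passage from $H$ to $A_\infty$ is mediated by $M$, not by a direct set-function comparison, and the heavy lifting is done by two black boxes from the literature rather than by an ad hoc iteration.
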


\begin{proof} 
It is known that if
$$
Cf(\theta)=\text{p.v.} \int_0^1 \frac {f(x)}{\tan  {\pi} (\theta-x)} dx
$$
is   the conjugate operator,  then for  an $f\in L^1(0,1)$ such that  $Cf\in L^1(0,1)$, the non-tangential maximal operator $Nf\in L^1(0,1)$ \cite{bs:bs}. Moreover, if $f\ge 0$, it is also known   \cite{bs:bs} that $Nf\approx Mf$ and, in fact,  
$$
\int_0^1 Mf(x) dx\lesssim \int_0^1 f(x) dx + \int_0^1 |Cf(x)|dx \lesssim  \int_0^1 f(x) dx + \int_0^1 |Hf(x)|dx. 
$$

Now, if $f$ is supported in an interval $I=(a, b)$, we can consider $f_I$ defined on $(0,1)$ as
$f_I(x)= f( (b-a) x+a)$ and, by translation and dilation invariance of the operators $M$ and $H$, we have that  
$$
\frac 1{|I|}\int_I Mf(x) dx \lesssim 
\frac 1{|I|}\ \int_I f(x) dx + \frac 1{|I|}\ \int_I |Hf(x)| dx.
$$
Consequently, if we take $f=u\chi_I$ and use (\ref{cp}) we obtain that, for every interval $I$, 
$$
\int_I M(u\chi_I)(x) dx\lesssim u(I),
$$
and hence $u\in A_\infty$  \cite{w:w, hp:hp}.
\end{proof}

It was proved in \cite{acs:acs}  that if $u\in A_1$, the weak-type boundedness of $H$ implies that  $w\in B_\infty^*$. Now, an easy modification of that proof (we   include the details for the sake of completeness) also shows that if $u\in A_\infty$, the same results holds.

\begin{theorem}\label{pp} If $H$ satisfies \text{\rm (\ref{Hilbertantecedents})} for some $0<p<\infty$, then $w\in B_\infty^*$. 
\end{theorem}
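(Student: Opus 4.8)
The plan is to reuse the scheme of \cite{acs:acs} (where this was done under the stronger hypothesis $u\in A_1$), the only new ingredient being that we are now entitled to use $u\in A_\infty$, which has just been established in Theorem~\ref{ainfinito}. The whole argument consists in testing the weak-type inequality against characteristic functions of intervals and exploiting the logarithmic blow-up of $H\chi_I$ near the endpoints of $I$.

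\emph{First} I would record the elementary identity $H\chi_I(x)=\frac{1}{\pi}\log\frac{|x-a|}{|x-a-\ell|}$ for $I=(a,a+\ell)$ and $x\notin\overline I$; it yields, for every $0<\eta<1$ and every $x$ in the left-adjacent interval $S_\eta:=(a-\eta\ell,a)$, the bound $|H\chi_I(x)|>\frac{1}{\pi}\log(1/\eta)$ (and the same for the right-adjacent interval). Hence $u(\{|H\chi_I|>\frac{1}{\pi}\log(1/\eta)\})\ge u(S_\eta)$, so $(H\chi_I)^*_u(\xi)\ge\frac{1}{\pi}\log(1/\eta)$ whenever $\xi<u(S_\eta)$. \emph{Next}, since $(\chi_I)^*_u=\chi_{(0,u(I))}$ we have $\|\chi_I\|_{\Lambda^p_u(w)}=W(u(I))^{1/p}$, and combining this with the assumed weak-type boundedness and letting $\xi\uparrow u(S_\eta)$ (using that $W$ is continuous) gives
\[
\big(\log(1/\eta)\big)^p\,W\big(u(S_\eta)\big)\lesssim W\big(u(I)\big),
\]
with a constant independent of $I$ and of $\eta\in(0,1)$.

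\emph{Then} I would convert this ``adjacent-interval'' estimate into the genuine $B^*_\infty$ condition. By Proposition~\ref{Bstar}(iv) it suffices to prove $W(t)/W(s)\lesssim(1+\log(s/t))^{-p}$ for all $0<t\le s$. Given such $t,s$, the range $s/t\le\kappa$ (for a fixed $\kappa$ depending only on $u$) is trivial since $W$ is increasing; in the range $s/t>\kappa$, using that $u\notin L^1(\R)$ (and reflecting via $x\mapsto -x$ if needed) I can pick an interval $I$ with $u(I)=s$ whose equal-length adjacent interval $I'$ has $u(I')\gtrsim s$ by the doubling of $u$, and then choose $\eta\in(0,1)$ with $u(S_\eta)=t$ for $S_\eta\subseteq I'$, $|S_\eta|=\eta|I'|$. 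The reverse $A_\infty$ inequality \eqref{Ainfty2} applied to $S_\eta\subseteq I'$ gives $t=u(S_\eta)\gtrsim\eta^q u(I')\gtrsim\eta^q s$, whence $\log(1/\eta)\gtrsim 1+\log(s/t)$ provided $\kappa$ is large enough; plugging this into the displayed estimate yields the desired bound, and hence $w\in B^*_\infty$.

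The delicate point is this last step: the estimate coming from $H$ compares $W(u(S_\eta))$ with $W(u(I))$ only for two specific intervals whose $u$-measures and lengths are tied together by the geometry of $u$, whereas the $B^*_\infty$ condition is a scale-free comparison of $W(t)$ and $W(s)$ for \emph{arbitrary} $0<t\le s$ with a logarithmic gain depending only on $t/s$. Passing from one to the other is exactly what forces the use of the quantitative $A_\infty$ information on $u$ — doubling, to make $u(I)\approx u(I')$, and the reverse-Hölder bound \eqref{Ainfty2}, to control $\eta$ in terms of $t/s$; this is the modification of the $A_1$ argument announced above, and the remaining manipulations are routine.
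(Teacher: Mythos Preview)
Your proposal is correct and follows essentially the same approach as the paper: test the weak-type inequality on $\chi_I$, exploit the logarithmic blow-up of $H\chi_I$ to get a bound of the form $W(u(S))\big(\log(1/\eta)\big)^p\lesssim W(u(I))$, and then use the $A_\infty$ property of $u$ (established in Theorem~\ref{ainfinito}) to convert the length ratio $\eta$ into the measure ratio $t/s$ and invoke Proposition~\ref{Bstar}. The only difference is geometric: the paper uses \emph{nested} centered intervals $S=(-b\nu,b\nu)\subset I=(-b,b)$ with $u(S)=t$, $u(I)=s$, so that the $A_\infty$ inequality \eqref{ainf} applies directly to give $\nu=|S|/|I|\lesssim(t/s)^{1/q}$; you instead use \emph{adjacent} intervals, which forces the extra (harmless) detour through doubling to relate $u(I')$ to $u(I)$ before applying \eqref{Ainfty2}. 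The paper's configuration is marginally cleaner, but the arguments are the same in substance.
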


\begin{proof} 
Let  $0<t \leq s<\infty$. Since $u\notin L^1(\mathbb R)$, there exists $\nu \in (0,1]$ and $b>0$  such that
$$
t=\int_{-b\nu}^{b\nu}u(r)\,dr \leq \int_{-b}^{b}u(r)\,dr =s.
$$
Now, simple computations of the Hilbert transform of the interval $(0, b)$ showed \cite{acs:acs} that, for every $b>0$, and every $\nu \in (0,1]$, 
\begin{equation}\label{log}
\frac{W\left(\int_{-b\nu}^{b\nu} u(s)\,ds\right)}{W\left(\int_{-b}^b u(s)\,ds\right)}\lesssim
               \left(1+ \log\frac{1}{\nu}\right)^{-p}.
\end{equation} and hence 
$$
\frac{W(t)}{W(s)}\lesssim \left(1+\log \frac{1}{\nu}\right)^{-p}.
$$
Let $S=(-b\nu, b\nu)$ and $I=(-b,b)$.
Since $u\in A_\infty$, we obtain by \eqref{ainf},  that there exists $q>1$ such that 
$$\nu=\frac{|S|}{|I|}\lesssim   \bigg( \frac{u(S)}{u(I)} \bigg)^{1/q}=  \bigg(\frac{t}{s} \bigg)^{1/q}
$$ and therefore 
$$
\frac{W(t)}{W(s)}\lesssim \left(1+\log \frac{s}{t}\right)^{-p}.
$$
From here, it follows by Proposition \ref{Bstar} that   $w\in B^*_{\infty}$.
 \end{proof}

\smallskip

Our next  goal   is to prove that 

\smallskip

$$
H:\Lambda_u^p(w) \to\Lambda_u^{p, \infty}(w)  \implies M:\Lambda_u^p(w) \to\Lambda_u^{p, \infty} (w). 
$$

\smallskip

Let us start with some previous lemmas. We need to introduce the following notation: given a finite family of disjoint intervals $\{I_i\}_i$, we shall denote by $I_i^*=101 I_i$. Then, 
$$
I_i^*=\bigcup_{j=-50}^{50} I_{i,j};  
$$
where $I_{i,j}$ is  the interval with   $|I_{i, j}|=|I_i|$,  
\begin{equation}\label{dist}
{\rm dist}( I_{i,j}, I_i)=(|j|-1)|I_i|, \qquad j\neq 0
\end{equation}\
and  such that $ I_{i,j}$ is situated to the left of $I_i$,  if $j<0$,  and to the right,  if $j>0$. Also, $I_{i, 0}=I_i$. 

If the family of intervals $\{I_i^*\}_i$ are pairwise disjoint, we say that  $\{I_i\}_i$ is {\it  well-separated}.

\begin{lemma}\label{cubos} Let $u  \in \Delta_2$. Then, given a well-separated finite family of intervals $\{I_i\}_i$,  it holds that 
$$
W^{1/p}\Big( u(\cup_i{I_{i, j_i}})\Big)\approx W^{1/p}\Big( u(\cup_i{I_i})\Big),
$$
for any choice of $j_i\in [-50, 50]$.

\end{lemma}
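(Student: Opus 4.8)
The goal is a two-sided comparison, so it suffices to prove each inequality separately; by symmetry (relabeling the roles of the two families of sets) it is enough to show
$$
W^{1/p}\Big(u\big(\textstyle\bigcup_i I_{i,j_i}\big)\Big)\lesssim W^{1/p}\Big(u\big(\textstyle\bigcup_i I_i\big)\Big),
$$
with a constant depending only on $p$, the $\Delta_2$ constant of $u$, and the $\Delta_2$ constant of $w$ — applying this with the roles of $\{I_i\}_i$ and $\{I_{i,j_i}\}_i$ interchanged (note $\{I_{i,j_i}\}_i$ is again a disjoint family, contained in the well-separated enlargements) then gives the reverse bound. Since $W^{1/p}$ is increasing, the whole statement reduces to a comparison of the two $u$-measures modulo the doubling of $W\circ u$; but we do not yet know $W\circ u$ is doubling here, so I will instead keep track of a fixed number of doublings of $W$.

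First I would reduce to the case $j_i\equiv j$ constant: writing $\bigcup_i I_{i,j_i}=\bigcup_{k=-50}^{50}\big(\bigcup_{i:\,j_i=k}I_{i,k}\big)$ and using that each subfamily $\{I_i\}_{i:\,j_i=k}$ is still well-separated, it is enough to bound $W^{1/p}(u(\bigcup_i I_{i,j}))$ by $W^{1/p}(u(\bigcup_i I_i))$ for a single $j\in[-50,50]$ and then sum the (at most $101$) resulting estimates, absorbing the loss into the $\Delta_2$ constant of $W$ (here I use $w\in\Delta_2$, which we are assuming throughout after Proposition~\ref{NC}, so that $W(Nr)\lesssim_N W(r)$). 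So fix $j$. Now the key geometric observation is that for each $i$, the interval $J_i:=I_i\cup I_{i,j}$ together with the gap between them is contained in an interval of length at most $(|j|+1)|I_i|\le 52|I_i|$ having the same "core" as $I_i$; more usefully, $I_{i,j}\subset 103\,I_i$, and the dilates $\{103\,I_i\}_i$ are still pairwise disjoint because $\{I_i^*\}_i=\{101I_i\}_i$ are — wait, $103>101$, so I would instead work with $I_{i,j}\subset I_i^*=101\,I_i$, which holds precisely because $\mathrm{dist}(I_{i,j},I_i)=(|j|-1)|I_i|\le 49|I_i|$ and $|I_{i,j}|=|I_i|$, giving $I_{i,j}\subset 101\,I_i=I_i^*$. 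Thus $\bigcup_i I_{i,j}\subset\bigcup_i I_i^*$, and the union on the right is a disjoint union of the dilated intervals $I_i^*$.

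Next I would estimate $u$-measures. For each fixed $i$, by the doubling property $u\in\Delta_2$ applied a bounded number of times (roughly $\log_2 101$ times), $u(I_i^*)=u(101\,I_i)\lesssim u(I_i)$, with constant depending only on the $\Delta_2$ constant of $u$. Hence
$$
u\Big(\bigcup_i I_{i,j}\Big)\le u\Big(\bigcup_i I_i^*\Big)=\sum_i u(I_i^*)\lesssim\sum_i u(I_i)=u\Big(\bigcup_i I_i\Big),
$$
using disjointness of both families in the two equalities. Finally, since $W$ is increasing and satisfies $\Delta_2$, from $u(\bigcup_i I_{i,j})\le C\,u(\bigcup_i I_i)$ with $C$ a fixed constant we get $W(u(\bigcup_i I_{i,j}))\le W(C\,u(\bigcup_i I_i))\lesssim W(u(\bigcup_i I_i))$ (choosing $N$ with $2^N\ge C$ and iterating $\Delta_2$ $N$ times). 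Taking $p$-th roots and then summing the $101$ cases $j\in[-50,50]$ gives one direction; swapping the two families gives the other, completing the proof.

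\textbf{Main obstacle.} The only real subtlety is bookkeeping: one must make sure all the constants (number of doublings of $u$ to pass from $I_i$ to $I_i^*$, number of doublings of $W$ to absorb the sum over $j$ and the constant $C$) depend only on $p$ and the $\Delta_2$ constants, and that disjointness is genuinely used — the well-separatedness hypothesis is exactly what makes $\{I_i^*\}_i$ disjoint, which is what turns the measure of the union into a sum and lets the pointwise bounds $u(I_i^*)\lesssim u(I_i)$ add up cleanly. No estimate on individual gaps or on $W\circ u$ being doubling is needed; the comparison is entirely elementary once the geometry $I_{i,j}\subset I_i^*$ is noted.
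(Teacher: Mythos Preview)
Your approach is essentially the paper's: use the containment $I_{i,j_i}\subset I_i^*$, disjointness of $\{I_i^*\}_i$, and iterated doubling of $u$ and $W$. The reduction to a constant $j$ is unnecessary, since $I_{i,j_i}\subset I_i^*$ holds for \emph{every} $j_i\in[-50,50]$; so the chain
\[
u\Big(\bigcup_i I_{i,j_i}\Big)\le u\Big(\bigcup_i I_i^*\Big)=\sum_i u(I_i^*)\lesssim\sum_i u(I_i)=u\Big(\bigcup_i I_i\Big)
\]
goes through directly without splitting into $101$ cases.

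There is, however, a small gap in your reverse direction. ``Swapping the roles'' of $\{I_i\}_i$ and $\{I_{i,j_i}\}_i$ in the displayed chain would require $\{I_{i,j_i}^*\}_i=\{101\,I_{i,j_i}\}_i$ to be pairwise disjoint --- that is where you converted the measure of the union into a sum --- and this need not hold: the family $\{I_{i,j_i}\}_i$ is disjoint (each $I_{i,j_i}\subset I_i^*$) but not in general well-separated, since $101\,I_{i,j_i}$ is $I_i^*$ translated by $j_i|I_i|$ and adjacent enlargements may now overlap. The fix is simply to reorder the steps, as the paper does: start from $u(\cup_i I_i)=\sum_i u(I_i)$ (disjointness of $\{I_i\}$), use $I_i\subset 101\,I_{i,j_i}$ and doubling of $u$ \emph{termwise} to get $\sum_i u(I_i)\lesssim\sum_i u(I_{i,j_i})$, and then collapse back to $u(\cup_i I_{i,j_i})$ using disjointness of $\{I_{i,j_i}\}_i$. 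No disjointness of the enlarged translates is needed.
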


\begin{proof} Since $w$ is also in $\Delta_2$, we have that 
\begin{eqnarray*}
W^{1/p}\Big( u(\cup_i{I_{i, j_i}})\Big)&\le&  W^{1/p}\Big( u(\cup_i{I_i^*})\Big)=W^{1/p}\Big( \sum_i u({I_i^*})\Big)
\\
&\lesssim& W^{1/p}\Big( \sum_i u({I_i})\Big)= W^{1/p}\Big( u( \cup{I_i})\Big). 
\end{eqnarray*}

On the other hand,  $I_i \subset  I_{i, j_i}^*$ and hence
$$
 u(\cup_i I_i)=\sum_i u(I_i)\lesssim \sum_i u(I_{i, j_i}^*)\lesssim \sum_i u(I_{i, j_i})=  u(\cup_i I_{i, j_i})
 $$
 and therefore
 $$
 W^{1/p}\Big( u(\cup_i{I_i})\Big) \lesssim W^{1/p}\Big( u(\cup_i{I_{i, j_i}})\Big), 
 $$
 and the result follows.  
 \end{proof}

\begin{lemma} \label{ji} Let $f$ be a positive locally integrable function,  $\lambda>0$ and assume  $ \{I_i\}_{i=1}^m$ is a well separated family of intervals so that, for every $i$, 
$$
 \lambda\le \frac{ \int_{I_i} f(y) dy}{|I_i|}\le 2 \lambda. 
 $$

Then, for every $1\le i\le m$,   there exists $j_i\in [-50, 50]\setminus \{0\}$ such that
$$
\Big|H\big( f\chi_{\cup_{i=1}^m I_i}\big)(x)\Big|\ge \frac \lambda 8, \qquad \mbox {for every }\quad x\in \cup_{i\in J} I_{i, j_i}.
$$
\end{lemma}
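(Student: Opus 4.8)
The plan is to estimate $H(f\chi_{\cup I_i})(x)$ for $x$ close to a fixed interval $I_i$ by splitting the Hilbert transform into the contribution of $f\chi_{I_i}$ and the contribution of the rest. First I would observe that, by the averaging hypothesis, $\int_{I_i}f\approx\lambda|I_i|$, so the ``self'' contribution is the main term. For a point $x$ lying in one of the neighbouring copies $I_{i,j}$ with $j\neq 0$, the kernel $1/(x-y)$ has a fixed sign as $y$ ranges over $I_i$, and its absolute value is comparable to $1/((|j|-1)|I_i|+|I_i|)\approx 1/(|j||I_i|)$ using \eqref{dist}; hence
$$
\Bigl|H(f\chi_{I_i})(x)\Bigr|\;=\;\frac1\pi\left|\int_{I_i}\frac{f(y)}{x-y}\,dy\right|\;\gtrsim\;\frac{1}{|j||I_i|}\int_{I_i}f(y)\,dy\;\gtrsim\;\frac{\lambda}{|j|}.
$$
Choosing $j_i\in\{\pm1\}$ (the nearest copies, at distance $0$ from $I_i$), this lower bound is of order $\lambda$; a careful choice of the absolute constant gives at least $\lambda/4$ on $I_{i,j_i}$.

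The remaining task is to control the ``tail'' $H\bigl(f\chi_{(\cup_k I_k)\setminus I_i}\bigr)(x)$ for $x\in I_{i,j_i}$ and to show it cannot cancel the main term by more than $\lambda/8$. Here I would use the well-separation of the family: the intervals $I_k^*=101I_k$ are pairwise disjoint, so for $k\neq i$ the set $I_k$ is separated from $x\in I_{i,j_i}\subset I_i^*$ by a distance that is at least a fixed multiple of $|I_k|$ (and of $\operatorname{dist}(I_k,I_i)$). This makes the kernel $1/(x-y)$ over $I_k$ essentially constant in $y$ and comparable to $1/\operatorname{dist}(x,I_k)$, so
$$
\left|\int_{I_k}\frac{f(y)}{x-y}\,dy\right|\;\lesssim\;\frac{\lambda|I_k|}{\operatorname{dist}(x,I_k)}.
$$
Summing over $k\neq i$ and using that the $I_k^*$ are disjoint, the sum $\sum_{k\neq i}|I_k|/\operatorname{dist}(x,I_k)$ is dominated by a convergent geometric-type series with a small absolute constant — this is exactly where the factor $101$ in the definition of $I_i^*$ buys room: one can make $\sum_{k\neq i}|I_k|/\operatorname{dist}(x,I_k)$ as small as needed, in particular small enough that the tail is bounded by $\lambda/8$. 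Combining the two estimates via the triangle inequality,
$$
\bigl|H(f\chi_{\cup_k I_k})(x)\bigr|\;\ge\;\bigl|H(f\chi_{I_i})(x)\bigr|-\bigl|H(f\chi_{(\cup_k I_k)\setminus I_i})(x)\bigr|\;\ge\;\frac{\lambda}{4}-\frac{\lambda}{8}\;=\;\frac{\lambda}{8},
$$
for every $x\in I_{i,j_i}$, which is the claim (with the index set $J$ being all of $\{1,\dots,m\}$).

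The main obstacle is the bookkeeping of constants: one must verify that the self-contribution genuinely dominates, i.e. that the absolute lower bound $\lambda/4$ on $I_{i,j_i}$ is correct (this requires pinning down how large $\pi^{-1}\int_{I_i}|x-y|^{-1}\,dy$ is for $x$ in the adjacent copy — note $|x-y|$ ranges roughly over $[0,2|I_i|]$, and the near-singularity only helps), and simultaneously that the well-separation constant $101$ is large enough to force the tail below $\lambda/8$ uniformly in the family. A secondary subtlety is that $f$ need only be locally integrable, so the sum defining the tail must be handled as an absolutely convergent series of integrals, which is fine since each $I_k$ is bounded away from $x$ and $f\chi_{\cup I_k}\in L^1$ on any bounded set; since the family is finite, there is in fact no convergence issue at all, only a uniform-in-$m$ bound, which the disjointness of the $I_k^*$ provides.
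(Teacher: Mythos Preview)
Your approach has a genuine gap in the tail estimate. You claim that well-separation (disjointness of the $I_k^*=101I_k$) forces
$\sum_{k\neq i}|I_k|/\operatorname{dist}(x,I_k)$ to be bounded by a small absolute constant, but this is false: the factor $101$ controls each individual term, yet there is no geometric decay to sum. For instance, take $I_1$ centred at $0$ and $I_k$ centred at $k-1$ for $2\le k\le m$, all of length $1/101$; then the $I_k^*$ are pairwise disjoint, and for $x\in I_{1,-1}$ one has $\operatorname{dist}(x,I_k)\approx k-1$, so the sum is $\approx\frac{1}{101}\sum_{k=2}^{m}\frac{1}{k-1}\approx\frac{\log m}{101}$. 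With $f\equiv\lambda$ on $\bigcup_k I_k$ the tail contribution is then of order $\lambda(\log m)/101$, which exceeds $\lambda/8$ once $m$ is large. Thus no choice of $j_i\in\{\pm1\}$ (or any fixed $j_i$) can work by a triangle-inequality argument alone.

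The paper sidesteps this by \emph{not} bounding the tail $C_i(x)=\sum_{k\neq i}\int_{I_k}\frac{f(y)}{x-y}\,dy$. Instead it observes that $C_i$ is \emph{monotone decreasing} on the gap to the left of $I_i$ (each summand is), and runs a dichotomy. If $C_i\le\lambda/4$ at the left endpoint of $I_{i,-1}$, monotonicity makes $C_i\le\lambda/4$ on all of $I_{i,-1}$; there the self-term has absolute value $\ge\lambda/2$ (your estimate), so $|Hg|\ge\lambda/4$ and $j_i=-1$ works. If instead $C_i>\lambda/4$ at that point, monotonicity forces $C_i>\lambda/4$ on every $I_{i,j}$ with $j\le-2$; moving out to $j_i=-17$ the self-term drops below $2\lambda/16=\lambda/8$, whence $Hg\ge\lambda/4-\lambda/8=\lambda/8$. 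The missing idea is that when the tail is large one should \emph{use} it rather than fight it; this is precisely why $j_i$ is allowed to depend on $i$ and range over $[-50,50]$ rather than being fixed at $\pm1$.
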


\begin{proof}  Given $1\le i\le m$, let us define, for every $x\notin \cup_{i=1}^m I_{i}$, 
$$
A_i(x)=\sum_{j=1}^{i-1} \int_{I_j} \frac{f(y)}{x-y} dy, 
\qquad
B_i(x)=\sum_{j=i+1}^{m} \int_{I_j} \frac{f(y)}{x-y} dy, 
$$
and
$$
C_i(x)=A_i(x)+B_i(x).
$$
If we write $g= f\chi_{\cup_{i=1}^m I_i}$,  we have that 
$$
Hg(x)= C_i(x)+ \int_{I_i} \frac{f(y)}{x-y} dy.
$$
It also holds that if $I_i=(a_i, b_i)$, then $A_i$, $B_i$, and hence $C_i$, are decreasing functions in the interval $(b_{i-1}, a_i)$. 

Let us write $I_{i, -1}=(a_{i, -1}, b_{i, -1})$. 

\noindent
(a) If $ C_i (a_{i, -1}) \le   \lambda /4$, then $ C_i (x) \le   \lambda/ 4$, for every $x\in I_{i, -1}$ and since for these $x$,  
$$
\bigg|\int_{I_i } \frac{f(y)}{x-y} dy\bigg|= \int_{I_i } \frac{f(y)}{|x-y|} dy\ge \frac{\int_{I_i} f(y) dy}{2|I_i|}\ge\frac \lambda 2,
$$
we obtain that, for every $x\in I_{i, -1}$
$$
Hg(x)\le  \frac \lambda 4 - \frac \lambda 2=-\frac \lambda 4
$$
and consequently $|Hg(x)|\ge \frac \lambda 4$,  for every $x\in I_{i, -1}$. Hence, in this case, we choose $j_i=-1$. 

\noindent
(b) If $ C_i (a_{i, -1}) >   \lambda /4$, then  $ C_i (x) \ge   \lambda /4$, for every $x\in I_{i,  j}$ with $j\in[ -50, -2]$. Now, by (\ref{dist}),  we have that if if $x\in   I_{i,  j}$,
$$
\bigg|\int_{I_i } \frac{f(y)}{x-y} dy\bigg|= \int_{I_i } \frac{f(y)}{|x-y|} dy\le \frac{\int_{I_i} f(y) dy}{{\rm dist}(I_{i,j}, I_i)}\le \frac {2\lambda}{|j|-1},
$$
and thus, if we take $j=-17$, we obtain that, for every $x\in   I_{i,  -17}$
$$
Hg(x) \ge  \frac \lambda 4- \frac\lambda 8= \frac \lambda 8, 
$$
and consequently, in this case, with $j_i=-17$    the result follows. 
\end{proof}

\begin{theorem}\label{ref} If $p>0$, then
$$
H:\Lambda_u^p(w) \to\Lambda_u^{p, \infty}(w)  \implies M:\Lambda_u^p(w) \to\Lambda_u^{p, \infty} (w). 
$$

\end{theorem}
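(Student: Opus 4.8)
The plan is to establish the weak-type bound $\|Mf\|_{\Lambda^{p,\infty}_u(w)}\lesssim\|f\|_{\Lambda^p_u(w)}$ directly. Since $W(t)=\int_0^tw$ is continuous, this is equivalent to $\lambda^pW\big(u(\{Mf>\lambda\})\big)\lesssim\|f\|_{\Lambda^p_u(w)}^p$ for all $\lambda>0$, and by standard monotone–convergence arguments I may assume $f\ge0$ is bounded with compact support. First I would invoke the one–third trick, $Mf\lesssim\sum_\alpha M^{\mathcal D^\alpha}f$ over a finite family of shifted dyadic grids $\mathcal D^\alpha$, together with $w\in\Delta_2$, to reduce matters to bounding $\mu^pW(u(\Omega))$, where $\mu\approx\lambda$ and $\Omega=\{M^{\mathcal D^\alpha}f>\mu\}=\bigcup_iQ_i$ is the union of the maximal $\mathcal D^\alpha$-dyadic cubes with $\mu<|Q_i|^{-1}\int_{Q_i}f\le2\mu$; the $Q_i$ are pairwise disjoint, $f\le\mu$ a.e.\ off $\Omega$, and, $\Omega$ being bounded, $\sum_iu(Q_i)=u(\Omega)<\infty$, so it suffices to treat finitely many of them. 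Throughout I would freely use that---by Theorems~\ref{ainfinito} and~\ref{pp} and Proposition~\ref{NC}---we already know $u\in A_\infty$, $w\in B^*_\infty$, $w\in\Delta_2$, and that $W\circ u$ is doubling.

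The core of the argument is an estimate for \emph{well-separated} families: if $\{I_i\}$ is well separated and $\lambda\le|I_i|^{-1}\int_{I_i}f\le2\lambda$ for every $i$, then $\lambda^pW\big(u(\bigcup_iI_i)\big)\lesssim\|f\|_{\Lambda^p_u(w)}^p$. Indeed, Lemma~\ref{ji} furnishes shifts $j_i\in[-50,50]\setminus\{0\}$ with $|H(f\chi_{\cup_iI_i})|\ge\lambda/8$ on $\bigcup_iI_{i,j_i}$; as $\{I_i\}$ is well separated the intervals $I_{i,j_i}\subset I_i^*$ are pairwise disjoint, and hence (using that $W$ is increasing and continuous) $\|H(f\chi_{\cup_iI_i})\|_{\Lambda^{p,\infty}_u(w)}\ge\tfrac\lambda8W\big(u(\bigcup_iI_{i,j_i})\big)^{1/p}\approx\lambda\,W\big(u(\bigcup_iI_i)\big)^{1/p}$ by Lemma~\ref{cubos} (applicable since $u,w\in\Delta_2$); one then invokes the hypothesis~\eqref{Hilbertantecedents} and $\|f\chi_{\cup_iI_i}\|_{\Lambda^p_u(w)}\le\|f\|_{\Lambda^p_u(w)}$.

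The remaining step is to pass from the disjoint family $\{Q_i\}$ of maximal cubes to well-separated families. This is genuinely delicate, since a disjoint family of intervals need not split into a bounded number of well-separated subfamilies (one cube can be close, relative to the respective sizes, to arbitrarily many pairwise disjoint smaller ones), so no coloring argument is available. My approach would be to \emph{merge}: replace $\{Q_i\}$ by the connected components $\{R_l\}$ of $\bigcup_i101\,Q_i$. Each $R_l$ is a chained union of the $101\,Q_i$ it contains, so $|R_l|\le101\sum_{Q_i\subset R_l}|Q_i|$; combined with $f\le\mu$ off $\Omega$ this yields $\mu/101\le|R_l|^{-1}\int_{R_l}f\le3\mu$, and---crucially---since $u\in A_\infty$, \eqref{Ainfty2} gives $u(R_l)\lesssim u\big(\bigcup_{Q_i\subset R_l}Q_i\big)$, whence $u(\bigcup_lR_l)\lesssim u(\Omega)$ and $W(u(\bigcup_lR_l))\lesssim W(u(\Omega))$ by $w\in\Delta_2$, the averages $|R_l|^{-1}\int_{R_l}f$ now lying in the bounded range $[\mu/101,3\mu]$. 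I would then split $\{R_l\}$ into boundedly many subfamilies whose averages are within a factor $2$, and iterate this construction on each of them until reaching well-separated families to which the core estimate applies, controlling $u$-measures via $A_\infty$ and the dilation/scale losses via $w\in B^*_\infty$---by Proposition~\ref{Bstar}, $W$ decays faster than any negative power of the logarithm, which is what lets such losses be absorbed.

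I expect this last step to be the main obstacle: each merge shrinks the $f$-averages and inflates the intervals, so one must control both the number of iterations and the resulting dispersion of averages in order that the concluding summation over orders of magnitude (and over the auxiliary subfamilies) cost only an absolute constant. This is exactly where the joint strength of $u\in A_\infty$ and $w\in B^*_\infty$---equivalently, the uniform-smallness property of Proposition~\ref{enbuencamino}---is needed, rather than $A_\infty$ alone; everything else reduces to routine bookkeeping with $w\in\Delta_2$ and the doubling of $W\circ u$.
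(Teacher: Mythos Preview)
Your core estimate for well-separated families (via Lemmas~\ref{ji} and~\ref{cubos} and the hypothesis on $H$) is exactly the heart of the paper's argument, and is correct. The gap is in how you manufacture a well-separated family. Your route through shifted dyadic grids followed by an iterative merging procedure is, as you yourself concede, the main obstacle, and you do not actually carry it out; moreover it is unnecessary. The paper obtains a well-separated family in one stroke: fix a compact $K\subset\{Mf>\lambda\}$, for each $x\in K$ choose an interval $I_x\ni x$ with $\lambda<|I_x|^{-1}\int_{I_x}f\le 2\lambda$, and apply the Vitali covering lemma to the \emph{dilated} cover $\{I_x^*\}_{x\in K}=\{101\,I_x\}_{x\in K}$ of $K$. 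Vitali returns a finite pairwise disjoint subfamily $\{I_i^*\}_{i=1}^m$ with $K\subset\bigcup_i 3I_i^*$; but ``$\{I_i^*\}$ pairwise disjoint'' is \emph{by definition} ``$\{I_i\}$ well-separated''. Then, using only that $u\in\Delta_2$ (available from Theorem~\ref{ainfinito}) and $w\in\Delta_2$,
\[
W\big(u(K)\big)\le W\Big(u\Big(\bigcup_i 3I_i^*\Big)\Big)\le W\Big(\sum_i u(303\,I_i)\Big)\lesssim W\Big(\sum_i u(I_i)\Big)=W\Big(u\Big(\bigcup_i I_i\Big)\Big),
\]
and your core estimate finishes the proof after taking the supremum over $K$. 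No dyadic grids, no merging, no iteration, and in particular no appeal to $w\in B^*_\infty$ or to the full strength of $A_\infty$ beyond the doubling of $u$.

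Your worry that ``a disjoint family of intervals need not split into a bounded number of well-separated subfamilies'' is entirely justified---and is precisely why one should not start from a merely disjoint family in the first place. Dilating \emph{before} applying Vitali is the one-line device that makes the whole difficulty disappear.
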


\begin{proof}  Let us  consider a positive locally integrable function $f$. Let $\lambda>0$ and let us take a compact set $K$
 such that $K\subset \{x: Mf(x)>\lambda\}$. Then, for each $x\in K$,  we can choose an interval $I_x$ such that
 \begin{equation*}\label{uuuu}
 \lambda<\frac{\int_{I_x} f(y) dy}{|I_x|}\le 2 \lambda. 
   \end{equation*}
 Then,  considering  $K\subset \cup_{x\in K} I_x^*$, we can obtain, using a Vitali covering lemma,  a well-separated finite family $\{I_{i}\}_{i=1}^m\subset \{I_{x}\}_x$, such that $K\subset \cup_i 3I_{i}^*$ and hence,
 \begin{equation}\label{K}
 W^{1/p} (u(K))\lesssim W^{1/p} (u(\cup_i 3I_{i}^*))\lesssim W^{1/p} (u(\cup_i I_{i})).
  \end{equation}

Now, by Lemma \ref{ji}, we obtain that there exists $j_i$ such that 
$$
\bigcup_{i=1}^m I_{i, j_i} \subset  \Big\{ \Big|H\big( f\chi_{\cup_{i=1}^m I_i}\big)(x)\Big|\ge \frac \lambda 8    \Big\} .
$$
Hence, by Lemma \ref{cubos}, we have that 
 \begin{eqnarray*}
 W\bigg(u\bigg(\bigcup_i I_{i}\bigg)\bigg)&\approx& W\bigg(u\bigg(\bigcup_{i=1}^m I_{i, j_i}\bigg)\bigg)\le  W\bigg(u\bigg(  \Big\{ \Big|H\big( f\chi_{\cup_{i=1}^m I_i}\big)(x)\Big|\ge \frac \lambda 8    \Big\}  \bigg)\bigg)
 \\
 & \lesssim& \frac 1{\lambda^p} ||f||_{\Lambda^p_u(w)}^p
 \end{eqnarray*}
and by (\ref{K}),  we obtain that 
$$
\lambda W^{1/p}(u(K)) \lesssim ||f||_{\Lambda_u^p(w) }. 
$$
Finally, the result follows by taking the supremum on all compact sets $K\subset \{ Mf>~\lambda\}$. 
\end{proof}

We finally present the   proof of our main Theorem \ref{main}. 

\begin{proof}[Proof of Theorem \ref{main}] If (\ref{Hilbertantecedents}) holds, then we have,  by Theorems \ref{ainfinito} and \ref{pp},  that $u\in A_\infty$ and  $w\in  B_\infty^*$. Also, by   Theorem  \ref{ref},   the weak-type boundedness of $M$ follows. 

Conversely,  it was proved  in \cite {bk:bk} that if  $u\in A_{\infty}$, 
\begin{equation*}\label{Hima}
(H^*f)^*_u(t)\lesssim \big(Q\,(Mf)^*_u\big) (t/4),
\end{equation*}
for all $t>0$, provided the right hand side is finite, where 
$$
H^*f(x)=\frac{1}{\pi} \sup_{\varepsilon> 0} \left|\int_{|x-y| > \varepsilon} \frac{f(y)}{x-y}\,dy\right|
$$
is  the Hilbert maximal operator. 
 Then, by Corollary \ref{Qweakweak Bstar} and the boundedness hypothesis on $M$, we have that 
\begin{align*}
||H^*f||_{\Lambda^{p, \infty}_u(w)}^{p}&\lesssim \sup_{t>0} W(t)^{1/p} Q(Mf)_u^*(t/4) \\
       &\lesssim  \sup_{t>0} W(t)^{1/p} (Mf)_u^*(t) 
\lesssim  \int_0^{\infty} f_u^* (t)^pw(t)\,dt,
\end{align*}
and therefore 
$$
H^*: \Lambda^p_u(w) \to \Lambda^{p, \infty}_u(w)
$$
is bounded. Now, since $C^\infty_c$ is dense in $\Lambda^p_u(w)$ and $Hf(x)$ is well defined at almost every point $x\in\mathbb R$, for every function  $f\in C^\infty_c$,  it follows by standard techniques that, for every $f\in \Lambda^p_u(w)$, $Hf(x)$ is well defined at almost every point $x\in\mathbb R$ and 
$$
H: \Lambda^p_u(w) \to \Lambda^{p, \infty}_u(w)
$$
is bounded, from which  the result follows. 
\end{proof}

Observe that we have also proved the following result:

\begin{theorem} \label{main3} If $0<p<\infty$, then
$$
H^*: \Lambda^p_u(w) \to \Lambda^{p, \infty}_u(w)
$$
is bounded if and only if  conditions {\rm (i)}, {\rm (ii)} and {\rm (iii)} of Theorem~\ref{main} hold. 
\end{theorem}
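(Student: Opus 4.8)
The plan is to show that Theorem~\ref{main3} follows almost for free from the work already done in the proof of Theorem~\ref{main}, since the Hilbert maximal operator $H^*$ dominates $H$ pointwise and the estimates we used were in fact proven for $H^*$. First I would observe that the implication ``conditions (i), (ii), (iii) $\Rightarrow$ $H^*$ bounded'' was established verbatim in the converse direction of the proof of Theorem~\ref{main}: assuming $u\in A_\infty$ we invoked the pointwise bound $(H^*f)^*_u(t)\lesssim (Q(Mf)^*_u)(t/4)$ from \cite{bk:bk}, and then Corollary~\ref{Qweakweak Bstar} together with the weak-type boundedness of $M$ gave $H^*:\Lambda^p_u(w)\to\Lambda^{p,\infty}_u(w)$. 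So that half requires no new argument; I would simply cite the relevant lines.

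For the converse, suppose $H^*:\Lambda^p_u(w)\to\Lambda^{p,\infty}_u(w)$ is bounded. Since $|Hf(x)|\le \pi H^*f(x)$ pointwise whenever $Hf(x)$ exists, the boundedness of $H^*$ immediately implies \eqref{Hilbertantecedents}, i.e. $H:\Lambda^p_u(w)\to\Lambda^{p,\infty}_u(w)$ is bounded. Then Theorems~\ref{ainfinito}, \ref{pp} and \ref{ref} apply directly to yield (i) $u\in A_\infty$, (ii) $w\in B^*_\infty$, and (iii) the weak-type boundedness of $M$, respectively. Thus conditions (i)--(iii) hold, which closes the equivalence.

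One point that deserves a line of care is the well-definedness of $Hf$ for $f\in\Lambda^p_u(w)$, so that the pointwise domination $|Hf|\le \pi H^*f$ is meaningful. But this is exactly the density/approximation argument already carried out at the end of the proof of Theorem~\ref{main}: using Proposition~\ref{properties}(b), $\mathcal C^\infty_c(\R)$ is dense in $\Lambda^p_u(w)$ (recall we assume $u\notin L^1(\R)$, $w\notin L^1(\R^+)$, $w\in\Delta_2$), and since $H^*$ controls the relevant maximal truncations, standard techniques show $Hf(x)$ exists a.e.\ for every $f\in\Lambda^p_u(w)$. I expect no genuine obstacle here: the entire content of Theorem~\ref{main3} is the remark that every estimate in the proof of Theorem~\ref{main} was really an estimate for $H^*$, and that $H\le \pi H^*$ makes the chain of implications run in both directions. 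The proof is therefore essentially a pointer back to the preceding arguments.

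\begin{proof}[Proof of Theorem \ref{main3}]
Assume first that conditions (i), (ii) and (iii) hold. This is precisely the situation treated in the converse part of the proof of Theorem~\ref{main}: since $u\in A_\infty$, the pointwise estimate $(H^*f)^*_u(t)\lesssim (Q\,(Mf)^*_u)(t/4)$ from \cite{bk:bk} holds whenever the right-hand side is finite, and combining it with Corollary~\ref{Qweakweak Bstar} (valid since $w\in B^*_\infty$) and the weak-type boundedness of $M$ yields
$$
\|H^*f\|_{\Lambda^{p,\infty}_u(w)}^p\lesssim \sup_{t>0} W(t)^{1/p} Q(Mf)^*_u(t/4)\lesssim \sup_{t>0} W(t)^{1/p}(Mf)^*_u(t)\lesssim \int_0^\infty f^*_u(t)^p w(t)\,dt,
$$
so that $H^*:\Lambda^p_u(w)\to\Lambda^{p,\infty}_u(w)$ is bounded.

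Conversely, assume $H^*:\Lambda^p_u(w)\to\Lambda^{p,\infty}_u(w)$ is bounded. Arguing as at the end of the proof of Theorem~\ref{main}, since $\mathcal C^\infty_c(\R)$ is dense in $\Lambda^p_u(w)$ by Proposition~\ref{properties}(b) and $Hf(x)$ is well defined a.e.\ for $f\in\mathcal C^\infty_c(\R)$, standard techniques show that $Hf(x)$ is well defined a.e.\ for every $f\in\Lambda^p_u(w)$. Moreover $|Hf(x)|\le \pi\,H^*f(x)$ pointwise, hence $H:\Lambda^p_u(w)\to\Lambda^{p,\infty}_u(w)$ is bounded; that is, \eqref{Hilbertantecedents} holds. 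By Theorems~\ref{ainfinito}, \ref{pp} and \ref{ref} we conclude, respectively, that $u\in A_\infty$, that $w\in B^*_\infty$, and that $M:\Lambda^p_u(w)\to\Lambda^{p,\infty}_u(w)$ is bounded, which are conditions (i), (ii) and (iii).
\end{proof}
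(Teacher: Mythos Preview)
Your proposal is correct and matches the paper's own treatment: the paper gives no separate proof of Theorem~\ref{main3} at all, simply remarking ``Observe that we have also proved the following result,'' since the converse direction of the proof of Theorem~\ref{main} already established the boundedness of $H^*$ from (i)--(iii), while $|Hf|\le H^*f$ reduces the other implication to Theorem~\ref{main} itself. (A trivial remark: with the normalizations in the paper both $H$ and $H^*$ carry the factor $1/\pi$, so the pointwise bound is $|Hf|\le H^*f$ rather than $|Hf|\le \pi H^*f$; this does not affect the argument.)
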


Taking into account Remark \ref{rarara} and Proposition \ref{enbuencamino}, we have the following characterization of \eqref{Hilbertantecedents}, in terms of geometric conditions on the weights,  in the case $0<p<1$. 

\begin{corollary} If $0<p<1$,  \eqref{Hilbertantecedents} holds if and only if $u\in A_\infty$, $w\in B^*_\infty$ and for every finite family of disjoint intervals $\{I_j\}_{j=1}^J$,
and every family of measurable sets $\{S_j\}_{j=1}^{J}$, with $S_j\subset I_j$, for every $j$,  we have that
\begin{equation}\label{raposo22}
\frac{W\left(u\left(\bigcup_{j=1}^J I_j\right)\right)}{W\left(u\left(\bigcup_{j=1}^J S_j\right)\right)}
       \leq C\max_{1\leq j\leq J} \left(\frac{|I_j|}{|S_j|}\right)^p, 
\end{equation}
or equivalently \eqref{raposo22} holds and, for every    $\varepsilon>0$,  there exists $0<\eta<1$ such that 
$$
W\left(u(S)\right)\leq \varepsilon W\left(u(I)\right),
$$
for every interval $I$ and every measurable set $S\subseteq I$ satisfying that  $|S|\leq \eta |I|$.
\end{corollary}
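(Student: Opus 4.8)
The plan is to obtain this Corollary as a direct assembly of three results already in hand: the main Theorem~\ref{main}, the characterization of the weak-type boundedness of $M$ in the range $0<p<1$ recalled in Remark~\ref{rarara}, and the geometric reformulation of ``$u\in A_\infty$ and $w\in B^*_\infty$'' provided by Proposition~\ref{enbuencamino}. No new analysis is required; the content lives entirely in those statements.

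First I would apply Theorem~\ref{main}: for every $0<p<\infty$ the boundedness \eqref{Hilbertantecedents} is equivalent to the conjunction of (i) $u\in A_\infty$, (ii) $w\in B^*_\infty$, and (iii) the weak-type boundedness $M:\Lambda^p_u(w)\to\Lambda^{p,\infty}_u(w)$. Then, restricting to $0<p<1$, I would invoke Remark~\ref{rarara} (that is, the result of \cite{crs:crs}), which says that condition (iii) is \emph{equivalent} to the geometric inequality
$$
\frac{W\left(u\left(\bigcup_{j=1}^J I_j\right)\right)}{W\left(u\left(\bigcup_{j=1}^J S_j\right)\right)}
       \leq C\max_{1\leq j\leq J} \left(\frac{|I_j|}{|S_j|}\right)^p
$$
over all finite families of disjoint intervals $\{I_j\}_{j=1}^J$ and all measurable sets $S_j\subseteq I_j$, i.e.\ exactly \eqref{raposo22}. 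Substituting this equivalence for (iii) yields the first characterization: \eqref{Hilbertantecedents} holds if and only if $u\in A_\infty$, $w\in B^*_\infty$ and \eqref{raposo22} holds.

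For the second (``equivalently'') formulation I would use Proposition~\ref{enbuencamino}, which asserts precisely that ``$u\in A_\infty$ and $w\in B^*_\infty$'' is equivalent to condition \eqref{ab}: for every $\varepsilon>0$ there is $0<\eta<1$ with $W(u(S))\le\varepsilon W(u(I))$ whenever $S\subseteq I$ and $|S|\le\eta|I|$. Replacing (i)$\wedge$(ii) by \eqref{ab} in the first characterization gives that \eqref{Hilbertantecedents} holds if and only if \eqref{raposo22} holds together with \eqref{ab}, which is the stated second form.

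I do not expect a genuine obstacle here; the only things to check are bookkeeping ones — that the displayed inequality in Remark~\ref{rarara} is literally \eqref{raposo22} up to renaming of variables, and that the chain of equivalences may be read in both directions of the ``if and only if'' (which is automatic, since each of the three links above is itself an equivalence). Accordingly I would keep the written proof to just a few lines, citing Theorem~\ref{main}, Remark~\ref{rarara}, and Proposition~\ref{enbuencamino}.
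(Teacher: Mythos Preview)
Your proposal is correct and matches the paper's own approach exactly: the corollary is obtained by combining Theorem~\ref{main}, the $p<1$ characterization of the weak-type boundedness of $M$ from Remark~\ref{rarara}, and Proposition~\ref{enbuencamino}. The paper presents this as a one-line deduction, precisely as you outline.
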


As mentioned in Remark  \ref{rarara}, the characterization of the weak-type boundedness of $M$ in the case $p\ge 1$ was left open in \cite{acs:acs} and it will be studied in a forthcoming paper. 

\ 

\centerline{\bf Application to the $L^{p.q}(u)$ spaces}

\

In the case of the Lorentz spaces $L^{p, q}(u)$ we observe that $L^{p, q}(u)=\Lambda^q_u(w)$ and $L^{p, \infty}(u)=\Lambda^{q, \infty}_u(w)$, with $w(t)=t^{ q/p -1}$ and since  in this case $w\in B^*_\infty$ and the boundedness of 
$$
M:L^{p,q}(u)\rightarrow L^{p,\infty}(u)
$$
is completely known (see \cite{crs:crs}, Theorem 3.6.1), we have  the following corollary,
extending the result of \cite[Theorem 5]{chk:chk} in the case of the Hilbert transform.

\begin{corollary}\label{312} For every $p, q>0$, 
$$
H: L^{p, q}(u)\longrightarrow L^{p, \infty}(u)
$$
is bounded if and only if $p\ge 1$ and

\noindent
{\rm (a)}  if  $p>1$ and $q>1$:  $u\in A_p$;

\noindent
{\rm (b)}  if $p>1$ and $q\le 1$:
$$
\frac{u(I)}{u(S)} \lesssim \bigg(\frac{|I|}{|S|}\bigg)^p
$$
for every measurable set $S\subset I$;

\noindent
{\rm (c)} if $p=1$, then necessarily $q\le 1$ and the condition is $u\in A_1$. 
\end{corollary}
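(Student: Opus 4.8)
The plan is to derive Corollary~\ref{312} as a specialization of Theorem~\ref{main} to the concrete weight $w(t)=t^{q/p-1}$, for which $W(t)=\frac{p}{q}t^{q/p}$. First I would verify the two structural facts that make the general machinery apply. Since $W(t)\approx t^{q/p}$, the power weight trivially satisfies $w\in B^*_\infty$ (indeed $W(t)/W(s)=(t/s)^{q/p}\lesssim(1+\log(s/t))^{-1}$ for $t\le s$, using Proposition~\ref{Bstar}(iii)), so condition (ii) of Theorem~\ref{main} is automatic and carries no information. Likewise, since $C^\infty_c$ is dense and the necessary conditions of Proposition~\ref{NC} must hold for $H$ to be bounded, one checks that $W$ being $p'$-quasiconcave (here with exponent the relevant $q$) together with $u\notin L^1(\R)$ is consistent precisely when $p\ge1$; the constraint $p\ge1$ comes from part (b) of Proposition~\ref{NC} applied to a degenerating family of nested intervals, exactly as in Theorem~\ref{pp}'s use of \eqref{log}, which forces a logarithmic decay incompatible with $W(u(\cdot))$ growing like a power unless the ambient space has the right integrability. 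So the first block of the proof is: reduce to $p\ge1$, and note (ii) is free.

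The core of the argument is then identifying condition (iii), $M:L^{p,q}(u)\to L^{p,\infty}(u)$, with the stated geometric conditions on $u$, which is where I would simply cite \cite[Theorem~3.6.1]{crs:crs}. That reference should give: for $q>1$ (so the defining weight satisfies $q/p>$ the relevant threshold), the maximal operator is bounded on $L^{p,q}(u)$ into $L^{p,\infty}(u)$ iff $u\in A_p$ — this is case (a); for $q\le1$, the decreasing-rearrangement cone forces a stronger pointwise comparison, namely $u(I)/u(S)\lesssim(|I|/|S|)^p$ for all $S\subset I$ — this is case (b); and when $p=1$, quasi-concavity of $W(t)=\frac1q t^q$ with $u\notin L^1$ is only possible if $q\le1$, and then the $q\le1$ condition above with $p=1$ reads $u(I)/u(S)\lesssim|I|/|S|$, which is equivalent to $u\in A_1$ — this is case (c). The remaining task is to check that condition (i), $u\in A_\infty$, is subsumed: in case (a) $A_p\subset A_\infty$; in cases (b) and (c) the power-type comparison $u(I)/u(S)\lesssim(|I|/|S|)^p$ with a fixed exponent is exactly (a quantitative form of) $u\in A_\infty$ via \eqref{Ainfty2}, so (i) is implied by (iii) here and adds nothing.

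Assembling: by Theorem~\ref{main}, $H:L^{p,q}(u)\to L^{p,\infty}(u)$ is bounded iff $u\in A_\infty$, $w\in B^*_\infty$, and $M:L^{p,q}(u)\to L^{p,\infty}(u)$ is bounded; the second is automatic, the first is implied by the third, so boundedness of $H$ is equivalent to boundedness of $M$ together with $p\ge1$, and then \cite[Theorem~3.6.1]{crs:crs} translates the boundedness of $M$ into (a), (b), (c) according to the ranges of $p$ and $q$. I expect the main obstacle to be the bookkeeping around the case $p=1$: one must argue cleanly that $q\le1$ is \emph{forced} (not assumed), using that $W(t)=\frac1q t^q$ must be $1$-quasiconcave by Proposition~\ref{NC}(c) — i.e. $t\mapsto W(t)/t$ essentially decreasing — which fails for $q>1$, while simultaneously $u\notin L^1(\R)$ must hold by Proposition~\ref{NC}(a); and then to verify that the $q\le1$ geometric condition with $p=1$ is genuinely equivalent to $u\in A_1$ rather than merely implied by it, which uses the self-improving/duality characterization of $A_1$ in terms of $Mu\lesssim u$ versus the interval comparison. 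The other ranges are essentially immediate translations once \cite[Theorem~3.6.1]{crs:crs} is invoked.
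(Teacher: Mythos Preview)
Your plan is essentially the paper's own argument: identify $L^{p,q}(u)=\Lambda^q_u(w)$ with $w(t)=t^{q/p-1}$, observe $w\in B^*_\infty$ is automatic, invoke Theorem~\ref{main}, and then cite \cite[Theorem~3.6.1]{crs:crs} for the characterization of $M:L^{p,q}(u)\to L^{p,\infty}(u)$, checking along the way that the resulting conditions on $u$ already encode $A_\infty$. The paper does not spell out more than this.

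One small correction on the part you yourself flagged as delicate: your argument that $q\le 1$ is forced when $p=1$ via Proposition~\ref{NC}(c) does not work. In the identification $L^{p,q}(u)=\Lambda^q_u(w)$ the role of the exponent in Theorem~\ref{main} is played by $q$, so Proposition~\ref{NC}(c) yields that $W$ is $q$-quasiconcave, and $W(t)=t^q/q$ satisfies this trivially for every $q$. The clean way to rule out $p=1$, $q>1$ is Proposition~\ref{NC}(d): one needs $w\in B_{q,\infty}=B_q$ (since $q>1$), but $r^q\int_r^\infty t^{q-1}/t^q\,dt=\infty$, so $w\notin B_q$. Alternatively, this exclusion is already contained in \cite[Theorem~3.6.1]{crs:crs}, which is what the paper relies on without further comment.
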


\begin{remark}
We observe that Corollary~\ref{312}, together with Theorem~\ref{ref}, gives us that, if $p>1$, $q>1$ and $u\in A_p$, then $M:L^{p, q}(u)\longrightarrow L^{p, \infty}(u)$, which was proved in \cite{chk:chk}.
\end{remark}

\noindent
\textbf{Acknowledgments.} We would like to thank the referee for some useful comments which have improved the final version of this paper.

The first author would also like to thank the State Scholarship Foundation I.K.Y., of Greece.{\selectlanguage{greek} H olokl\'hrwsh ths erga$\sigma$\'ias aut\'hs \'egine sto pla\'isio 
ths ulopo\'ihshs tou metaptuqiako\'u progr\'ammatos pou sugqrhmatodot\'hjhke m\'esw ths 
Pr\'axhs <<Pr\'ogramma qor\'hghshs upotrofi\'wn I.K.U.  me diadika$\sigma$\'ia 
exatomikeum\'enhs axiol\'oghshs akad. \'etous 2011-2012>> ap\'o p\'orous tou 
E.P. <<Ekpa\'ideush kai dia b\'iou m\'ajhsh>> tou Eurwpaiko\'u koinwniko\'u tame\'iou
 (EKT) kai tou ESPA, tou 2007-2013.}

\end{document}